\documentclass[a4paper,10pt]{article}
\usepackage[centertags]{amsmath}
\usepackage{amsfonts}
\usepackage{amssymb}
\usepackage{amsthm}
\usepackage{epsfig}
\usepackage{setspace}
\usepackage{ae}
\usepackage{eucal}
\usepackage[usenames]{color}%
\setcounter{MaxMatrixCols}{30}%
\usepackage{graphicx}

\theoremstyle{plain}
\newtheorem{thm}{Theorem}[section]
\newtheorem{lem}[thm]{Lemma}
\newtheorem{cor}[thm]{Corollary}
\newtheorem{prop}[thm]{Proposition}
\theoremstyle{definition}
\newtheorem{defi}[thm]{Definition}

\theoremstyle{remark}
\newtheorem{exmp}[thm]{Example}
\newtheorem{rem}[thm]{Remark}

\renewcommand{\div}{\operatorname{div}}

\begin{document}
\title{Construction of unshielded singular solutions of the harmonic field equations}
\author{J\"org Kampen }

\maketitle

\begin{abstract} 
Singular solutions of the harmonic Einstein evolution equation are constructed which are related to spatially global and time-local solutions for a certain class of quasilinear hyperbolic systems of second order.  The constructed singularities of curvature invariants occur generically and are accessible by g.a.p. curves of finite length. The singularities are not strongly censored, and for strongly asymptotically predictable space-times, they are located in the causal past of the future null infinity, and are, hence, not shielded by a black hole. Related ideas may be applied to other hyperbolic equations and, especially, to the Euler equation, but there are special features in case of the Einstein field equation which are considered here in detail. First the data on the Cauchy surface have to be chosen such that the harmonic field equation are well defined in a vicinity of the Cauchy surface in the sense that a uniform Lorentz condition holds. Second we choose data such that curvature invariants blow up at one point in the domain of the metric functions, where the data  have H\"{o}lder continuous first order derivatives everywhere and are smooth in the complement of that one point. Estimates related to convoluted data with Lipschitz continuous first order spatial derivatives are extended to this class of weaker data. Moreover the singular solutions are stable in the sense that  singular solutions are seperated by a ball in some $L^p$ space from any given solution of the Einstein field equation with bounded curvature invariant.
\end{abstract}


2010 Mathematics Subject Classification 35Q76. 
\section{Harmonic Einstein equations, unshielded singularities and cosmic censorship}
In this section we consider the mathematical and physical background of unshielded or 'naked' singularities, which are, according to one attempt of definition, singularities located in the causal past of the future null infinity. The concept of a 'singularity' in classical gravity is elusive as the extensions of different proposes for this concept seem to be either too extensive or too narrow for different reasonable purposes. Even the reasonable concept of shielded singularities just mentioned is a bit narrow in the sense that it is usually defined relative to strongly asymptotically predictable space-times.  Due to this situation, it seems to be easier to prove existence results of singularities than to exclude a certain type of singularities in a broad variety of senses. In order to prove a convincing existence result of unshielded singularities it is sufficient to choose a rather strong concept of a singularity such as the blow up of a curvature invariant along a curve of finite generalised affine parameter length.  Our considerations here are motivated by a certain structure of the Einstein field equation, but similar constructions can be done for a certain class of quasilinear hyperbolic equations of second order as well.
The field equations determine the coefficients $g_{\mu\nu},~0\leq \mu,\nu\leq n$ of the line element
\begin{equation}
ds^2=\sum_{\mu,\nu=0}^n g_{\mu\nu}dx_{\mu}dx_{\nu}
\end{equation}
of a Lorentz manifold $M$, where the zero component refers to time by convention. The usual assumption is that there are three spatial dimensions, i.e., $n=3$, but since the Kalutza-Klein paper appeared there have always been hypotheses around with $n>3$, where the Lorentz metric can be generalized to arbitrary dimension straightforwardly, and no dimension-specific Lorentz-group structure is needed here. It seems reasonable to be not specific with respect to dimension and just assume $n\geq 3$. 

Depending on the nature of singularities considered they are in general located on the boundary $\partial M$ of a manifold $M$ with respect to some topology which has to be defined according to the purposes of the investigation. Such boundaries can be very bizarr and may have counterintuitive properties. Since our intention in this paper is to construct singularities related to curvature blow-ups we may use a rather strong topology. Note that depending on the topology we can include or must exclude (parts of) the boundary from the manifold itself. Especially, if we want basic invariants such as dimension to be well-defined. We better work with $C^p$-manifolds for $p\geq 1$, maybe with exceptions for specific very restricted sets. For, otherwise, we may run into problems concerning the invariance of domain and so on.  We shall consider a rather strong topology for $M\setminus \partial M$ imposed component functions $g_{ij}$ of the metric $g$, which is a covariant $2$-form tensor with Lorentzian signature. The components $g_{ij},~0\leq i,j\leq n$ (where the the component $0$ refers to time) are given in Euclidean coordinates and are in $C^{1,\delta}\left((-\epsilon,\epsilon)\times {\mathbb R}^n,{\mathbb R} \right)$, where the latter space denotes the space of differentiable functions with H\"{o}lder continuous first order derivatives of exponent $\delta\in (0,1)$. 
Here, the local time interval $(-\epsilon,\epsilon)$ for some small $\epsilon >0$ indicates that we shall consider a time-local solution in a neighborhood of a Cauchy surface with a definite Lorentzian signature. The harmonic field equations involve second order derivatives of the metric components such that there is no classical solution of these hyperbolic equations in the function space $C^{1,\delta}\setminus C^2$ for these metric components. In our construction the metric solution has  only one point of space-time in the latter space. This point will be on the boundary of a classical solution. More precisley, the solution of the harmonic field equation assume data $g_{0ij}\in C^{1,\delta}\left({\mathbb R}^n,{\mathbb R} \right)$  for  $1\leq i,j\leq n$, which are smooth in the complement of the origin and in $H^{s+1}\equiv H^{s+1}\left({\mathbb R}^n\right) $ for $s>\frac{n}{2}$. Here the subscript $0$ indicates that time is fixed at $t=t_0$ such that the initial data functions $g_{0ij}:{\mathbb R}^n\rightarrow {\mathbb R}$ are restrictions of the functions $g_{ij}:{\mathbb R}^{n+1}\rightarrow {\mathbb R}$. Related assumptions are made for the first order time derivatives of the initial data (indicated by an upper dot), i.e., $\stackrel{\cdot}{g}_{0ij}\in C^{0,\delta}\left({\mathbb R}^n,{\mathbb R} \right)\cap H^{s} ,~1\leq i,j\leq n$ for some $s>\frac{n}{2}$, and for the first order spatial derivatives of the initial data, i.e., $g_{0ij,k}\in C^{0,\delta}\left({\mathbb R}^n,{\mathbb R} \right)\cap H^s ,~1\leq i,j\leq n$ for some $s>\frac{n}{2}$.
  Note that the curvature invariants involve second order derivatives of the metric tensor, and can, hence, may blow up for some metric components $g_{ij}$ with $g_{ij}\in C^{1,\delta}$. In order to prove time-local existence it is usually assumed there is a Cauchy surface $\Sigma$ and a local time neighborhood of $\Sigma$ such that the metric components have a uniform Lorentzian signature in this neighborhood (cf. \cite{HKM}). 
In this neighboorhood of invariant signature  the metric tensor components $g_{ij}$ satisfy a harmonic field equation on $(-\epsilon,\epsilon)\times {\mathbb R}^n$ with respect to harmonic coordinates $(t,x)$ for $t\in (-\epsilon,\epsilon)$ and for some small $\epsilon >0$. 
   
Note that the loss of well-posedness of the field equations (beyond local-time well-posedness) may be due to singularities or to the loss of a given Lorentzian signature as time passes by. In the following we sometimes use Einstein summation, and use the more classical notation with explicit symbols of sums if we want to emphasize some structure of equations. Notation of ordinary partial derivatives with respect to the variable $x^i$ is either denoted by a subscript $,i$ or by $\frac{\partial}{\partial x^i}$. It is well-known that the field equations can be subsumed under a certain class of quasilinear hyperbolic systems of second order -which were seemingly first studied systematically by Hilbert and Courant. This subsumption is used in \cite{HKM}, but the result obtained on the abstract level is not strong enough for our purposes. For this reason we stick with the special field equations, where we can use special features. In the physical context, as long as considerations of higher dimension seem to be of a speculative type, it seems appropriate to consider the classical field equations in classical space with spatial dimension three and then remark that the result can be generalized (if needed). It is in space-time dimension $3+1$, where calculations based on the field equations produced predictions which were confirmed by experiment. So we think of $n=4$ in general, but keep the treatment general as this costs us nothing.  Recall that the signature of the metric $g_{ij}$ is the number of positive eigenvalues of the matrix $\left( g_{ij}\right)$, i.e., the spatial dimension $n$ in our case, where the index zero is reserved for the time dimension.
In the following representation of the Einstein field equation in (\ref{harm}) Greek indices run from $0$ to $n$ and latin indices run from $1$ to $n$ (cf. also similar notation in \cite{HKM}). For a Lorentz metric on ${\mathbb R}\times {\mathbb R}^n$ we may consider the field equations as a first order quasi-linear hyperbolic system with harmonic coordinates for $g_{\mu\nu},g_{\mu\nu,k},h_{\mu\nu}=\frac{\partial g_{\mu\nu}}{\partial t}$ of the form
\begin{equation}\label{harm}
\left\lbrace \begin{array}{ll}
\frac{\partial g_{\mu\nu}}{\partial t}=h_{\mu\nu}\\
\\
\frac{\partial g_{\mu\nu,k}}{\partial t}=\frac{\partial h_{\mu\nu}}{\partial x^k}\\
\\
\frac{\partial h_{\mu\nu}}{\partial t}=-g_{00}\left(2g^{0k}\frac{\partial h_{\mu\nu}}{\partial x^k}+g^{km}\frac{\partial g_{\mu\nu,k}}{\partial x^m} -2H_{\mu\nu}\right) , 
\end{array}\right.
\end{equation}
with data $g_{\mu\nu}(t_0,.)$ and $h_{\mu\nu}(t_0,.)$ at some time $t_0$, and where
\begin{equation}
\begin{array}{ll}
H_{\mu\nu}\mbox{ is given in (\ref{hmunu}) below.}
\end{array}
\end{equation}
In this context we use the convention
\begin{equation}\label{gamma}
\Gamma^i=g^{\alpha\beta}\Gamma^{i}_{\alpha\beta},
\end{equation}
where we recall that the Christoffel symbols are defined to be
\begin{equation}
\Gamma^{\mu}_{\alpha\beta}=\frac{1}{2}g^{\mu\rho}\left(g_{\rho\alpha,\beta}+g_{\rho\beta,\alpha}-g_{\alpha\beta,\rho} \right).
\end{equation}
As we have have space-time dimension $n+1$ this is a system for
\begin{equation}
\frac{(n+1)(n+2)}{2}(1+n+1)=\frac{(n+1)}{2}(n+2)^2~\mbox{unknowns}~g_{\mu\nu},g_{\mu\nu,k},h_{\mu\nu},
\end{equation}
(or $50$ unknowns in case of space-time of dimension $3+1$).
This system is another way of writing the vacuum field equations
\begin{equation}
R^h_{\mu\nu}=0
\end{equation}
with additional variables of course, where the upper script $h$ indicates that the Ricci tensor $R_{\mu\nu}$ is written in harmonic coordinates. The coordinates are called harmonic because, usually, the  Einstein equations (without energy-momentum source)  are written in coordinates where they take the form
\begin{equation}\label{einsthilb}
G_{\mu\nu}=R_{\mu\nu}-\frac{1}{2}g_{\mu\nu}R=0,
\end{equation}
which contains an additional 'potential' term $\frac{1}{2}g_{\mu\nu}R$. More formally coordinates are called harmonic if $\Gamma^{\mu}(x)=0$, which implies that the coordinate functions themselves are harmonic with respect to the d' Alembert operator. We should note that the vaccum field equations can be written in the form $R_{\mu\nu}=0$ of course, were it was one of the main difficulties to find the right form in the presence of the energy momentum tensor.  In this context recall that the Ricci tensor is given by
\begin{equation}\label{curv}
R_{\mu\nu}=\frac{\partial \Gamma^{\alpha}_{\mu\nu}}{\partial x^{\alpha}}-\frac{\partial \Gamma^{\alpha}_{\alpha\mu}}{\partial x^{\nu}}-\Gamma^{\alpha}_{\mu\nu}\Gamma^{\beta}_{\alpha\beta}-\Gamma^{\alpha}_{\mu\beta}\Gamma^{\beta}_{\nu\alpha},
\end{equation}
and that the scalar curvature is given by
\begin{equation}
R=g^{\mu\nu}R_{\mu\nu}.
\end{equation}
Note that $R$ is a scalar function where the evaluation of a scalar at a point $p\in M$ is denoted by $R_p$.
Historically, it was a major step to find this additional term (saving covariance), and $G_{\mu\nu}$ is called the Einstein tensor. More precisely and for example, in the presence of matter a conservation law should hold such that for (possibly variable) density $\rho_0$ and velocity $v^{\mu},~0\leq \mu\leq n$ the stress-energy-momentum tensor
\begin{equation}
T^{\mu\nu}=\rho_0 v^{\mu}v^{\nu},~0\leq\mu,\nu\leq n,
\end{equation}
satisfies
\begin{equation}
T^{\mu\nu}_{\hspace{0.3cm};\nu}=0,
\end{equation}
and this requirement leads to (\ref{einsthilb}) as we have
\begin{equation}
G^{\mu\nu}_{\hspace{0.3cm};\nu}=0.
\end{equation}
Maybe the tensor $G^{\mu\nu}$ should be called the Einstein-Hilbert tensor, because it is quite possible that Hilbert was the first in November 1915 who wrote the equation  $R_{\mu\nu}-\frac{1}{2}g_{\mu\nu}R=T_{\mu\nu}$ on a blackboard in G\"{o}ttingen in a derivation via variational calculus (with a clear insight that the inhomogeneous term $-\frac{1}{2}g_{\mu\nu}R$ is needed to keep covariance in the presence of matter) \footnote{Note that four pages of Hilbert's corresponding publication are missing in the archive of the academy in Berlin while the variational principle is given in the correct form, and it seems very unlikely that Hilbert did or could not not derive the equations in (\ref{einsthilb}) from the variational principles - probably it is a paragraph in the missing pages.}.
In is a major step to formulate the field equations in the presence of matter. Note again that in the absence of matter we have $g^{\mu\nu}g_{\mu\nu}=n+1$, and therefore we get
\begin{equation}
g^{\mu\nu}\left( R_{\mu\nu}-\frac{1}{2}g_{\mu\nu}R\right)=R-\frac{n+1}{2}R=0,
\end{equation}
hence $R=0$, and the field equations reduce to the vacuum field equations
\begin{equation}
R_{\mu\nu}=0.
\end{equation}
Riemann could have written down them (or may be he has), but Einstein gave meaning to them. The harmonic coordinates used above can be used also in the presence of matter, of course.
In this article we construct singular solutions for the vacuum equations. We note that our method can be applied to extended equations of the form
\begin{equation}
G_{\mu\nu}+\Lambda g_{\mu\nu}=\kappa T_{\mu\nu},
\end{equation}
where $\Lambda$ is a cosmological constant, and $T_{\mu\nu}$ is the energy momentum tensor. Here $\kappa$ is a coupling constant which may be chosen to be $\kappa =1$ (adapting $T_{\mu\nu}$). Such generalisations depend on conditions on the additional terms, of course. For example a positive cosmological constant $\Lambda$ has a damping effect in the region where the metric tensor satisfies a Lorentz condition (and can be written in harmonic form). However, generalisations of the following results are possible for positive and negative cosmological constant. Up to the matter term $T_{\mu\nu}$ the field equations look locally like ordinary wave equations of course (easily to solve), but globally these simple equations are glued together which makes them nonlinear and difficult to solve. Accordingly, most of the research concerns specific solutions to the field equations, while  research of the general equations is more in the context of hyperbolic systems of second order which are investigated by more general methods. Another approach is to study general properties of solutions, of course (cf. \cite{HE}). Results are then applied to the field equations without using their special structure.    For example, in \cite{HKM} it is observed that the field equations for a Lorentz metric can be subsumed by hyperbolic systems of second order of the form
\begin{equation}\label{hypsec}
a_{00}\frac{\partial^2\psi}{\partial t^2}=\sum_{i,j=1}^na_{ij}\frac{\partial^2\psi}{\partial x^i\partial x^j}+\sum_{i=1}^n\left(a_{0i}+a_{i0} \right)\frac{\partial^2\psi}{\partial t\partial x_i}+b, 
\end{equation}
where $\psi=\left(\psi_1,\cdots,\psi_n \right)^T$ is a $n$-vector-valued function of time $t\in [0,T]$ and spatial variables $x=\left(x^1,\cdots,x^n \right)\in {\mathbb R}^n$, and $a_{ij},~1\leq i,j\leq n$ is a collection of $n\times n$-matrix valued functions of suppressed arguments of $t,x,\psi,\frac{\partial \psi}{\partial t},\nabla\psi$, and $b$ is a $n$-vector-valued function of the same arguments (the latter sentence is a citation of \cite{HKM}, p. 274 essentially).   
Local triviality and global complexity are characteristics of the Einstein field equation as their derivation principles are simple (cf. the equivalence principle) while second order tensors like the Ricci tensor (which contain the global information) can have a rich structure, so rich, that general investigations of the field equations work with further assumptions, for example with the assumption of asymptotically predictable space- times. The difficulty of defining certain concepts such as 'singularity', 'black holes', 'weak cosmic censorship' is related to that richness such that these concepts are defined relative to such classes of space-times (such as the mentioned  class of strongly asymptotically predictable space-times). We recall a related class of concepts which lead  us to a concept of black holes, unshielded (naked) singularities, and a concept of weak censorship. We refer the reader to \cite{HE,N1,N2,P,W} for a more detailed discussion of these notions. First we introduce a class of space-times for which black holes are well-defined.
\begin{defi}
A strongly asymptotically predictable space-time is an asymptotically flat space-time $(M,g)$ such that there exists an open region $U$ in the conformal space-time extension $\left( \tilde{M},\tilde{g}\right)$ such that
\begin{itemize}
 \item[i)] $U\supset M\cap J^-\left( I^+ \right)$,
 \item[ii)] $(U,\tilde{g})$ is globally hyperbolic.
\end{itemize}

\end{defi}
In this context of asymptotically predictable space-times black holes can be defined without reference to elusive concept of a singularity. We have

\begin{defi}
The region $B\subseteq M$ is called a black hole of a strongly asymptotically predictable space-time $(M,g)$, if it is the complement of the causal part $J^-$ of the future null infinity $I^+$, i.e., $B=M\setminus J^-\left(I^+\right)$. 
\end{defi}

\begin{defi}
The boundary $H^+:=M\cap\partial J^-\left(I^+\right)$ is the event horizon of a black hole. 
\end{defi}

\begin{defi}
A singularity of space-time is called naked or unshielded if it is located in the the causal past of null infinity $J^-\left(I^+\right)$.
\end{defi}
The weak cosmic censorship conjecture maintains that there is no naked or unshielded  singularity. This concept is due to Hawking and Penrose, of course. It seems to be a rather involved concept, but it is a certain way of making precise Penrose's early statement of 1969 (citation):
\begin{center}
''does there exist a 'cosmic censor' who forbids the appearance of naked singularities closing each one in an absolute event horizon?''
\end{center}
The strong cosmic censorship hypothesis for a metric Lorentzian manifold $\left( M,g_{ij}\right)$ is often defined by strong global hyperbolicity, i.e., the requirement that there is a Cauchy surface $\Sigma$ such that
\begin{equation}
M=D^+\left(\Sigma\right)\cup D^-\left(\Sigma\right), 
\end{equation}
where $D^+\left(\Sigma\right)$ (resp. $D^-\left(\Sigma\right)$) are arcwise connected components separated by $S$ and represent the causal future and the causal past relative to the Cauchy surface $\Sigma$ defined by causal curves. 
The elusiveness of the concept of singularities then leads to a weak interpretation of the concept of a singularity in terms of geodesically incompleteness. Next we discuss some notions of the vague concept of singularities and different attempts to make it precise. This is important in order to understand the role of the Hawking-Penrose theorem, the conjectures of weak and strong cosmic censorship, and the results and arguments of this paper, which can be read as comments on these theorems and claims.
 
Defining singularities by geodesic incompleteness is a well-motivated approach because it seems that other definitions are far too narrow or far to wide. However, we may use a strong definition of singularity and prove its existence for a generic set of Lorentz metrics satisfying the Einstein evolution equation in order to disprove weak cosmic censorship statements which are based on weaker (wider or more extensive) notions of singularities. First let us recall the relevant notions. Note that the line element $ds^2= g_{ij}dx^idx^j$ defines a metric 
\begin{equation}
g:TM\times TM\rightarrow {\mathbb R},
\end{equation}
which is a bilinear form, and where $TM$ denotes the tangential bundle on $M$. Here we should indicate that in the definition of the harmonic field equations   the metric components  $g_{ij}$ are defined with respect to a Euclidean parmetrisation which is mediated locally by charts. This mediatioon by charts is supressed here for brevity.

\begin{defi}
The generalized affine parameter length of a curve $\gamma :[0,c)\rightarrow M$ with respect to a frame $E_s=(e_i(s))_{0\leq i\leq n},¸0\leq s\leq c$ of a family of basis vectors in ${\mathbb R}^n$ (abbreviated by g.a.p.) is given by
\begin{equation}
l_E(\gamma)=\int_0^c\left(\sum_{i=0}^{n}g\left(\stackrel{\cdot}{\gamma(s)},e_i(s)  \right)  \right) ds
\end{equation}

\end{defi}

\begin{defi}
A a curve $\gamma :[0,c)\rightarrow M$ is incomplete if it has finite g.a.p. with respect to some frame, and it is inextensible if there is no limit in $M$ of $\gamma(s)$ as $s$ approaches $c$. Furthermore a space-time is called incomplete if it contains an incomplete inextensible curve.
\end{defi}
Singularities may be approached by future directed incomplete inextensible curves, i.e. curves with nonnegative Lorentz-metric at all points of the curve, and by past-directed inextensible curves, i.e., curves with nonpositive Lorentz-metric at all points of the curve. Future-directed curves starting at a point $p\in M$ are denoted by $I^+(p)$ and past-directed curves are noted by $I^-(p)$. Null-curves or light-curves starting at a point $p$  are curves with zero value of the Lorentz-metric at all points of the curve considered, and are located at the boundary of $I^+(p)$, where it is a matter of taste to include or exclude the boundary of $I^+(p)$ in the definition of $I^+(p)$ (we included this boundary here). The set of singularities related to a Lorentz manifold $(M,g_{\mu\nu})$ which are endpoints of inextensible curves in $I^+(p)$ starting from some point $p\in M$ are denoted by $M^+$, and the set of singularities which are endpoints of inextensible curves in $I^-(p)$ starting from some point $p\in M$ are denoted by $M^-$. For a curve $\gamma :[0,c)\rightarrow M$ with positive $c\in {\mathbb R}\cup \left\lbrace \infty\right\rbrace $ we define
\begin{equation}
I^+(\gamma):=\cup_{q\in \gamma([0,c))}I^+(q),~I^-(\gamma):=\cup_{q\in \gamma([0,c))}I^-(q).
\end{equation}

\begin{defi}\label{geoinc}
A space-time is geodesically incomplete if it contains a geodesic curve which is incomplete.
\end{defi}
The content of the Hawking Penrose theorem is not our main concern here- we refer to \cite{HE} for the precise discussion of its assumptions. We have
\begin{thm}\label{HP}
(Hawking Penrose Theorem) Assume that a time oriented space-time $(M,g_{ij})$ satisfies the conditions
\begin{itemize}
 \item[i)] $R_{ij}X^iX^j\geq 0$ for any non-spacelike vector $X^i$.
 \item[ii)] The timelike and null generic conditions are satisfied.
 \item[iii)] There are no closed timelike curves.
 \item[iv)] One of the three condition holds
 \subitem iva) There exists a trapped surface.
 \subitem ivb) There exists an achronal set without edges. 
 \subitem ivc) There exists a $p\in M$ such that for each future directed null-	
 
 \hspace{1.2cm} geodesic through $p$ the expansion becomes negative.  
\end{itemize}
Then the manifold $(M,g_{\mu\nu})$ contains at least one incomplete timelike or null geodesic.
\end{thm}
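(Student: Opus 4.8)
\emph{Proof proposal.} This statement is the Hawking--Penrose singularity theorem, and the plan is to follow the classical contradiction argument (the full development being in \cite{HE}), which rests on two essentially independent ingredients: a \emph{focusing} estimate coming from the energy and generic conditions, and a \emph{global causal} analysis exploiting the trapping hypothesis (iv) together with the chronology hypothesis (iii). I would assume, for contradiction, that $(M,g_{ij})$ is \emph{causally geodesically complete}, i.e.\ every inextendible timelike or null geodesic extends to all affine parameter values, and derive a contradiction. The first step is to show that under (i) and (ii) every complete causal geodesic carries a pair of conjugate points. The tool is the Raychaudhuri equation for the expansion $\theta$ of a congruence of causal geodesics with tangent $K$, which after discarding the vorticity and shear terms reads $d\theta/d\lambda \le -\theta^2/m - R_{ab}K^aK^b$ for the appropriate positive integer $m$; hypothesis (i) makes the curvature term nonpositive, so once $\theta$ becomes negative it is driven to $-\infty$ within bounded affine parameter, which by completeness forces a conjugate point. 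Hypothesis (ii), the timelike and null generic conditions, guarantees that the shear or curvature term is nonzero somewhere along each geodesic, hence that $\theta$ does become strictly negative in each direction — so conjugate points occur on \emph{every} complete causal geodesic. The same comparison yields the submanifold versions: a point conjugate to a spacelike hypersurface $\Sigma$ along every complete future-directed timelike geodesic issuing orthogonally from $\Sigma$ when the future expansion of $\Sigma$ is negative, and a point conjugate to $p$ along every complete null geodesic through $p$.

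The second step is to reduce the trapping hypothesis (iv) to the existence of a compact \emph{future trapped set} $S$, meaning a closed achronal set with $E^+(S):=\overline{J^+(S)}\setminus I^+(S)$ compact (or a past trapped set, which is handled by reversing the time orientation throughout). If (ivb) holds, an edgeless compact achronal set $S$ has $E^+(S)=S$ and is trapped. If (ivc) holds, applying Step~1 to the congruence of future null geodesics through $p$ shows these geodesics leave $\partial I^+(p)$ after focusing, so $E^+(p)$ is compact. If (iva) holds, the compact trapped surface $T$ has both null normal congruences reconverging, so $E^+(T)$ is compact. In all cases a compact trapped set exists.

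The third step is the global contradiction. With a compact future trapped set $S$ in hand, I would set $F:=E^+(S)$ and examine the future Cauchy development $D^+(F)$ and its horizon $H^+(F)$. If $H^+(F)\ne\emptyset$, I would argue it is compact — its past-inextendible null generators are complete geodesics and cannot escape ``to infinity'' because they are trapped over the compact set $F$ — and then a nonempty compact achronal set ruled by complete null generators without past endpoints must contain a closed null geodesic, contradicting (iii). If instead $H^+(F)=\emptyset$, then $\mathrm{int}\,D^+(F)$ is a globally hyperbolic spacetime with the compact $F$ as a Cauchy surface, and a limit-curve argument (accumulating a sequence of length-maximizing causal curves from $F$ to points receding in $\mathrm{int}\,D^+(F)$) produces a complete, future-inextendible timelike geodesic issuing orthogonally from $F$ that maximizes Lorentzian arclength from $F$ along its whole length; such a geodesic admits no point conjugate to $F$, contradicting Step~1. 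Either alternative yields a contradiction, so $(M,g_{ij})$ cannot be causally geodesically complete and therefore contains an incomplete timelike or null geodesic.

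The hard part will not be Step~1, which is an ODE comparison argument, routine once the Raychaudhuri equation and the generic condition are in place. The delicate work is in Step~3: proving compactness of $H^+(F)$ when it is nonempty, setting up the precise limit-curve and achronal-limit lemmas needed to build the maximizing inextendible geodesic in the case $H^+(F)=\emptyset$, verifying carefully that each of the three alternatives in (iv) genuinely yields a trapped set, and exploiting the chronology hypothesis (iii) both to forbid closed causal geodesics and to keep $\partial I^+$, $E^\pm$ and $D^\pm$ well behaved. This causal-topology bookkeeping is precisely what occupies an entire chapter of \cite{HE}; here I would give only the skeleton and refer there for the technical lemmas.
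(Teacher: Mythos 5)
The paper offers no proof of Theorem \ref{HP} at all: immediately after the statement it simply remarks that the theorem ``is not our main concern here'' and refers the reader to \cite{HE}. It is quoted as a known classical result and is invoked in the surrounding discussion only to motivate the notions of censorship and singularity; no argument for it appears anywhere in the text. So there is nothing in the paper to compare your proposal against line by line.

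That said, your sketch is a reasonable skeleton of the standard Hawking--Penrose argument that actually lives in \cite{HE} (Ch.~8, Theorem 2 and Lemma 8.2.1), and its three-step structure (Raychaudhuri focusing from (i)--(ii), reduction of each alternative in (iv) to a compact future- or past-trapped set, global causal contradiction via the Cauchy horizon of $E^+(S)$) is the right one. A few places are looser than the real argument: the horismos is usually defined as $E^+(S)=J^+(S)\setminus I^+(S)$ rather than with a closure; in (ivb) the achronal set without edge must be assumed compact (the paper's statement silently omits this, and you correctly reinstate it); and the dichotomy you draw in Step~3 on whether $H^+(F)$ is empty is a compression of what in \cite{HE} is the separate Lemma~8.2.1 (which produces a future-inextendible timelike curve $\gamma\subset\operatorname{int}D^+(E^+(S))$ assuming completeness) followed by the limit-curve and maximizing-geodesic argument in the theorem proper, with strong causality -- which you do not mention but which is itself derived from (i)--(iii) plus completeness -- doing real work to forbid imprisoned almost-closed null geodesics on a noncompact $H^+$. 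You flag exactly these as the delicate points and defer to \cite{HE}, which is appropriate; just be aware that the paper itself does the same deferral and never attempts the proof.
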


Generic occurrence of singularities for the field equations, where 'generic' is to be understood as 'generic relative to physically reasonable space-times', is one feature of the field equations which may be successfully expressed by theorem \ref{HP}. Another proposed feature of Hawking and Penrose is that singularities are (at least weakly) censored, i.e. shielded by black holes or it is even not possible to reach the singularity from any point in space-time by an incomplete curve of finite g.a.p. length.  
It is in this respect that our result provides contradictory evidence. We  Let us first recall the principles of strong and weak cosmic censorships, where we use the usual terminology of the textbooks.

\begin{defi}\label{scc}
(strong cosmic censorship). A  Lorentz space-time manifols $(M,g)$ is strongly censored if it is locally inextendible.  
\end{defi}

\begin{defi}\label{wcc}
(weak cosmic censorship). A singularity point $p\in M$ of a space-time $(M,g)$ is weakly censored if it is not in the causal past of the future null infinity. Accordingly, a space-time is said to have strongly censored singularities if for all Cauchy surfaces of $M$ all singularities in $M^+$ and $M^-$ are strongly censored.  
\end{defi}

The reason for the characterization of singularities by geodesically incomplete curves is that other characterizations turn out to be too narrow or to extensive, but counterexamples concerning the cosmic censorship hypotheses may be based on a narrow concept which is subsumed by the wider concepts proposedin \cite{HE}. 
\begin{defi}\label{ss}
We say that $p\in M^+$ is a strong scalar curvature singularity if there is a incomplete g.a.p. finite  curve $\gamma:[0,c)\rightarrow M$ such that
\begin{equation}
\lim_{s\uparrow c}\gamma(s)=p,
\end{equation}
and
\begin{equation}
\forall \epsilon >0\forall~C>0~\exists~s\in [c-\epsilon,c):~
{\big |}
R_{\gamma(s)}{\big |}\geq C.
\end{equation}
We also say that the scalar curvature invariant blows up at $p$.
\end{defi}
The constructed strong scalar curvature singularities of the field equations below are stable in the sense that for any classical solution $g_{\mu\nu},~0\leq \mu,\nu\leq n$ there is a ball (or a cylinder) in an $L^p$ space (for some $p\geq 1$) of field equation solutions with strong scalar curvature blow up such that the classical solution $g_{\mu\nu},~ 0\leq \mu,\nu\leq n$ is not in this ball or cylinder.
Note that curvature tensor invariants involve second order spatial derivatives of the metric component functions $g_{\mu\nu}$. For this reason there are metric functions which have H\"{o}lder continuous first order derivatives and where curvature invariants blows up. There are even metric functions $g_{\mu\nu}$ which have H\"{o}lder continuous first order derivative and are smooth in the complement of the origin $(0,0)$. First we give an example
\begin{exmp}\label{mainexp}
We observe that data can have weak singularities at a singular point at the boundary of space-time,  where the field equations can still be solved in the complement of this point.  The weak singularities at the origin are located at the boundary of the equation and are not part of the classical solution constructed. The data can be chosen such that they are part of a weak solution of the field equations.
For example, consider a metric of spatial dimension $n=2$ which depends only on one spatial variable, say $x^1$, and is constant with respect to time such that for the spatial indices $1\leq i,j\leq 2$ and for fixed time we have 
\begin{equation}
\left(g_{ij} \right)_{1\leq i,j\leq 2} =\left( \begin{array}{ll}
 1 + \phi_{\delta}(x^1)f(x^1)\hspace{0.5cm} 0\\
 \\
 0 \hspace{2.8cm}1
\end{array}\right) 
,\end{equation}
where $f$ is a univariate function on the field of real number ${\mathbb R}$ of the form  $z\rightarrow f(z)=z^3\cos\left(\frac{1}{z^{\alpha}}\right)$ for $\alpha\in (0.5,1)$ $z\neq 0$ and $f(0)=0$, and $\phi_{\delta}\in C^{\infty}$ is  a function with support $(\-\delta,\delta)$ and with $\phi_{\delta}(0)=1$ (as known for partitions of unity). Note that
\begin{equation}
f\in H^2~\mbox{and}~f\not\in C^2~\mbox{ for }~\alpha \in (0.5,1).
\end{equation}
Since $n=2$ we have $g_{ij}\in H^2$ even for $\alpha \in (0.5,1.5)$.
 
Evaluating the derivatives of $f$ you observe that the second derivative is discontinuous and even blows up at $z=0$. It follows that second order derivatives of the metric $g_{ij}$ with respect to the spatial variable $x^1$ blow up at the origin. As some first order derivatives of the Christoffel symbols in the definition of the Ricci tensor $R_{ij}$ in (\ref{curv}) contain such non-vanishing second order spatial derivatives of the metric $g_{ij}$ a simple calculation shows that the non-vanishing second order derivative terms do not cancel, and as $g_{ij}$ is positive definite and bounded with a bounded inverse $g^{ij}$ the scalar curvature $R=g^{ij}R_{ij}$ blows up at the origin and is smooth in the complement of the origin. Such phenomena are consistent with constraint equations on the Cauchy surface as we shall observe later.
\end{exmp}
The latter example is rather generic. We have
\begin{prop}\label{mainprop}
Let $C^{1,\delta}\equiv C^{1,\delta}\left({\mathbb R}^n\right)$ the space of differentiable functions with H\"{o}lder continuous first order derivatives of H\"{o}lder exponent $\delta \in (0,1)$ or with Lipschitz continuous first order derivatives. Define 
\begin{equation}
|f(x)|_{1,\delta}:=\sum_{0\leq |\alpha|\leq 1}\sup_{y\in {\mathbb R^{n}}}{\big |}D^{\alpha}_xf(y){\big |}+\sup_{y\neq z,~y,z\in {\mathbb R}^n}{\Bigg |}\frac{|D^{\alpha}_xf(z)-D^{\alpha}_yf(y)}{|z-y|^{\delta}}{\Bigg |},
\end{equation}
where $\delta\in (0,1]$.
 For fixed time $t=t_0\in {\mathbb R}$ we consider the metric functions $g^{t_0}_{ij}\equiv g_{ij}(t_0,.),~1\leq i,j\leq n$ for $0\leq i,j\leq n$. Then in any neighborhood of $g^{t_0}_{ij}\in C^2\cap H^2$ in $\left( C^{1,\delta},|.|_{1,\delta}\right)$ there is a function $\tilde{g}^{t_0}_{ij}\in C^{1,\delta}\cap H^2$ such that typical curvature invariants of $\tilde{g}^{t_0}_{ij}$ (such as the scalar curvature) blow up at the origin and are classically well-defined in the complement of the origin.
\end{prop}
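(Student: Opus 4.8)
\textit{Plan of proof.} I would obtain $\tilde g^{t_0}_{ij}$ by adding to $g^{t_0}_{ij}$ a small, radially symmetric, compactly supported perturbation of a \emph{single} component, of exactly the type appearing in the example above. Fix the H\"older exponent $\delta\in(0,1)$ and pick $\alpha$ in the nonempty interval $\left(\tfrac12,\,1-\tfrac\delta2\right)$. Put $F(r):=r^3\cos\left(r^{-\alpha}\right)$ for $r>0$ and $F(0):=0$, let $\phi\in C^\infty_c\left({\mathbb R}^n\right)$ be a radial bump with $\phi\equiv1$ on a ball $B_{2\rho}(0)$ and $\operatorname{supp}\phi\subset B_{3\rho}(0)$, and set $\psi(x):=\phi(x)\,F(|x|)$. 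Exactly as in the example one checks: $\psi$ is smooth on ${\mathbb R}^n\setminus\{0\}$; $\psi(0)=0$ and $\nabla\psi(0)=0$; near the origin $\left|\nabla\psi(x)\right|\le C|x|^{2-\alpha}$ and $\left|\nabla^2\psi(x)\right|\le C|x|^{1-2\alpha}$, so $\nabla\psi$ is locally H\"older of exponent $2-2\alpha\ge\delta$ and, $\psi$ having compact support, $\psi\in C^{1,\delta}\left({\mathbb R}^n\right)$; and $\left|\nabla^2\psi\right|^2\lesssim|x|^{2-4\alpha}$ is integrable on ${\mathbb R}^n$ for every $\alpha<1$ when $n\ge2$ (here the spatial dimension helps, unlike in the one-dimensional computation of the example), whence $\psi\in H^2\left({\mathbb R}^n\right)$. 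Finally, along the coordinate ray $x=te_2$, $t\downarrow0$, the Hessian of $\psi$ is diagonal with $\partial_2\partial_2\psi(te_2)=F''(t)=-\alpha^2t^{1-2\alpha}\cos\left(t^{-\alpha}\right)+O\!\left(t^{1-\alpha}\right)$ and $\partial_k\partial_k\psi(te_2)=O\!\left(t^{1-\alpha}\right)$ for $k\ne2$; thus $\partial_2\partial_2\psi$ is unbounded on that ray while the other Hessian entries stay bounded there.

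Given a neighbourhood of $g^{t_0}_{ij}\in C^2\cap H^2$ in $\left(C^{1,\delta},|\cdot|_{1,\delta}\right)$, fix $\varepsilon>0$ so that the $\varepsilon$-balls for $|\cdot|_{1,\delta}$ around the components lie inside it, and let $\lambda_0>0$ bound the eigenvalues of $\left(g^{t_0}_{ij}\right)$ from below on $\operatorname{supp}\phi$. Since $|\cdot|_{1,\delta}$ is positively homogeneous, for $c>0$ small enough the function $\tilde g^{t_0}_{ij}:=g^{t_0}_{ij}+c\,\psi\,\delta_{i1}\delta_{j1}$ lies in the neighbourhood, is symmetric positive definite --- a Riemannian metric of signature $n$, the perturbation of the $(1,1)$-entry being at most $\lambda_0/2$ in absolute value --- and belongs to $C^{1,\delta}\cap H^2$ with $\tilde g^{t_0}_{ij}\in C^2\left({\mathbb R}^n\setminus\{0\}\right)$. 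Since any polynomial curvature invariant of a metric is a rational expression in the metric together with its first and second derivatives, with denominator a power of the (here non-vanishing) determinant, all such invariants of $\tilde g^{t_0}_{ij}$ are continuous --- that is, classically well defined --- on ${\mathbb R}^n\setminus\{0\}$.

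It remains to show that the scalar curvature $R$ of $\tilde g^{t_0}_{ij}$ is unbounded on every neighbourhood of $0$. By (\ref{curv}), $R=g^{\mu\nu}R_{\mu\nu}$ is affine-linear in the second derivatives of the metric, the coefficient of $\partial_a\partial_b g_{cd}$ being a smooth function of the metric components alone (a sum of products of entries of $g^{-1}$), bounded where $\det g$ is bounded away from $0$. Writing $\tilde g^{t_0}_{ij}=g^{t_0}_{ij}+c\,h_{ij}$ with $h_{ij}=\psi\,\delta_{i1}\delta_{j1}$, and using $h(0)=0$, $\partial h(0)=0$ together with the boundedness of $g^{t_0}$, $\partial g^{t_0}$, $\partial^2g^{t_0}$ on $\operatorname{supp}\phi$, one obtains on a punctured neighbourhood of the origin
\begin{equation}
R\left[\tilde g^{t_0}\right]=c\left(a^{i1}a^{j1}-a^{11}a^{ij}\right)\partial_i\partial_j\psi+r(x),\qquad\left(a^{ij}\right):=\left(g^{t_0}(0)\right)^{-1},
\end{equation}
where the displayed leading term is $c$ times the linearisation of the scalar curvature, evaluated on the perturbation $h$, at the frozen metric $g^{t_0}(0)$, and $r$ is bounded near $0$: every remaining contribution either involves only bounded quantities --- among them $\partial^2g^{t_0}$ and $\partial\psi$ --- or carries one of the factors $\tilde g^{t_0}-g^{t_0}=O\!\left(|x|^3\right)$, $\partial(\tilde g^{t_0}-g^{t_0})=O\!\left(|x|^{2-\alpha}\right)$, $g^{t_0}-g^{t_0}(0)=O(|x|)$ against $\partial^2\psi=O\!\left(|x|^{1-2\alpha}\right)$, and $\alpha<1$ makes these products $o(1)$. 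Evaluating along $x=te_2$ and inserting the Hessian computed above gives
\begin{equation}
R\left[\tilde g^{t_0}\right](te_2)=c\left(a^{11}a^{22}-\left(a^{12}\right)^2\right)\alpha^2\,t^{1-2\alpha}\cos\left(t^{-\alpha}\right)+O\!\left(t^{1-\alpha}\right)+r(te_2),
\end{equation}
and $a^{11}a^{22}-\left(a^{12}\right)^2>0$, being the determinant of a $2\times2$ principal submatrix of the positive definite matrix $\left(a^{ij}\right)$. Taking $t=t_k\downarrow0$ with $t_k^{-\alpha}=2\pi k$ makes $\cos\left(t_k^{-\alpha}\right)=1$ and $t_k^{1-2\alpha}\to\infty$, so $R\left[\tilde g^{t_0}\right](t_ke_2)\to+\infty$: thus $R$ blows up at the origin, and --- along the radial segment $t\mapsto te_2$, which has finite generalised affine parameter length because $\tilde g^{t_0}$ is continuous and bounded near $0$ --- in the sense of Definition \ref{ss}. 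The same one-component perturbation forces other polynomial curvature invariants to blow up too: for instance $R_{1212}\left[\tilde g^{t_0}\right]=-\tfrac c2\,\partial_2\partial_2\psi+(\text{bounded})$, and since $\left(\tilde g^{t_0}\right)^{ij}$ is positive definite (uniformly) near $0$ the Kretschmann scalar satisfies $R_{\mu\nu\rho\sigma}R^{\mu\nu\rho\sigma}\ge c_0\left(R_{1212}\right)^2$ for some $c_0>0$, hence is unbounded near $0$ as well.

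The step I expect to be the real obstacle is the non-cancellation of this leading singular term: for a general metric perturbation the principal (second-order) part of a curvature invariant can be blind to it --- the very phenomenon that forces gauge conditions on the linearised Einstein operator --- so one must exhibit a perturbation and an approach direction for which the principal symbol of the invariant does not annihilate $\psi$. Perturbing only the $(1,1)$-component and approaching the singular point along a coordinate axis transverse to $x^1$ achieves exactly this, the strict inequality $a^{11}a^{22}-(a^{12})^2>0$ --- a consequence of the positive definiteness of $g^{t_0}(0)^{-1}$ --- delivering a leading coefficient of definite sign. The remaining work --- the Schauder- and Sobolev-type bookkeeping that places $\psi$ in $C^{1,\delta}\cap H^2$, and the verification that the non-principal and nonlinear curvature terms stay bounded at $0$ --- is routine and uses only $\psi(0)=0$, $\nabla\psi(0)=0$, the bound $\left|\nabla^2\psi\right|\lesssim|x|^{1-2\alpha}$ near $0$, and $g^{t_0}\in C^2\cap H^2$.
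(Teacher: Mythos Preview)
Your argument is correct and follows the same blueprint as the paper: perturb a single metric component by a cut-off multiple of $r^{3}\cos(r^{-\alpha})$ with $\alpha\in(\tfrac12,1)$, so that the function lies in $C^{1,\delta}\cap H^{2}$ while a second derivative---and hence the scalar curvature---blows up. Two differences are worth noting. First, the paper makes the perturbation depend on a single coordinate $x^{1}$, whereas you use the radial variable $|x|$; your choice is the one that actually produces a singularity concentrated at the origin (as the proposition asserts), since a perturbation in $x^{1}$ alone is singular along an entire hyperplane segment $\{x^{1}=0\}$. Second, the paper disposes of the non-cancellation issue in one sentence (``a simple calculation shows that the non-vanishing second order derivative terms do not cancel''), while you carry out this step in full: you freeze the inverse metric at the origin, identify the principal part of the linearised scalar curvature as $(a^{i1}a^{j1}-a^{11}a^{ij})\partial_{i}\partial_{j}\psi$, and then evaluate along the transverse ray $te_{2}$, where the leading coefficient $a^{11}a^{22}-(a^{12})^{2}>0$ by positive definiteness. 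You are also explicit about the constraint $\alpha<1-\tfrac{\delta}{2}$ needed to place $\psi$ in $C^{1,\delta}$, which the paper leaves implicit. In short, the route is the same, but your execution is tighter and your radial variant matches the stated conclusion more precisely.
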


\begin{proof}
The scalar curvature satisfies
\begin{equation}
R=g^{\mu\nu}R_{\mu\nu}=g^{\mu\nu}\left( \frac{\partial \Gamma^{\alpha}_{\mu\nu}}{\partial x^{\alpha}}-\frac{\partial \Gamma^{\alpha}_{\alpha\mu}}{\partial x^{\nu}}-\Gamma^{\alpha}_{\mu\nu}\Gamma^{\beta}_{\alpha\beta}-\Gamma^{\alpha}_{\mu\beta}\Gamma^{\beta}_{\nu\alpha}\right) ,
\end{equation}
where the last three terms in
\begin{equation}
\begin{array}{ll}
\Gamma^{\gamma}_{\alpha\beta,\delta}=\frac{1}{2}g^{\gamma\rho}_{,\delta}\left(g_{\rho\alpha,\beta}+g_{\rho\beta,\alpha}-g_{\alpha\beta,\rho} \right)\\
\\
\hspace{1cm}+\frac{1}{2}g^{\gamma\rho}\left(g_{\rho\alpha,\beta,\delta}+g_{\rho\beta,\alpha,j}-g_{\alpha\beta,\rho,\delta} \right)
\end{array}
\end{equation}
contain second order derivatives of the metric. It is essential to consider the local behaviour around the origin. It is always possible to extend such fields such that they have an appropriate behavior at spatial infinity such that the Cauchy problem is well-defined (cf. next section). Let $C>0$ be some constant. 
First, for $\alpha \in (0.5,1)$ consider the univariate function $f:{\mathbb R}\rightarrow {\mathbb R}$ with 
\begin{equation}\label{dataf}
f(z)=\left\lbrace \begin{array}{ll}
      (C+z^3\cos\left(\frac{1}{z^{\alpha}}\right))\phi_{\delta}(z),~ \mbox{if}~z\neq 0\\
      \\
      C~\mbox{if}~z=0.
     \end{array}\right.
\end{equation}
For $z\in (-\delta,\delta)$ the first derivative is
\begin{equation}
f'(z)=\left\lbrace \begin{array}{ll}
      3z^2\cos\left(\frac{1}{z^\alpha}\right)+\alpha z^{2-\alpha}\sin\left(\frac{1}{z^{\alpha}}\right) ,~ \mbox{if}~z\neq 0,\\
      \\
      0~\mbox{if}~z=0.
     \end{array}\right.
\end{equation}
For $z\in (-\delta,\delta)$ the second derivative is
\begin{equation}
f''(z)=\left\lbrace \begin{array}{ll}
      6z\cos\left(\frac{1}{z^{\alpha}}\right)+3 \alpha z^{1-\alpha}\sin\left(\frac{1}{z^{\alpha}}\right)\\
      \\
      +\alpha (2-\alpha) z^{1-\alpha}\sin\left(\frac{1}{z^{\alpha}}\right)-\alpha^2 z^{1-2\alpha}\cos\left(\frac{1}{z^{\alpha}}\right),~ \mbox{if}~z\neq 0,\\
      \\
      \mbox{undefined}~\mbox{if}~z=0.
     \end{array}\right.
\end{equation}
The only singular term is
\begin{equation}
-\alpha^2 z^{1-2\alpha}\cos\left(\frac{1}{z^{\alpha}}\right)
\end{equation}

Hence $f\not\in C^2$ and for $\alpha\in (0.5,1)$ we have a singularity of order $1-2\alpha\in (-0.5,0)$ with upper $L^2$-integrable upper bound $\frac{C}{|z|^{1-2\alpha}}$.
Now consider a $C^{\infty}$- function $\phi_{\delta,\epsilon}:{\mathbb R}\rightarrow {\mathbb R}$ with support in $(-\epsilon,\epsilon)$ and such that $\phi_{\epsilon,\delta}(z)=1$ for $z\in (-\delta,\delta)$. Such functions are well known from the context of partitions of unity. Then in any neighborhood (with respect to  the normed function space stated) of a metric $g_{ij}(t_0,.)$ (evaluated at some time $t_0$) we find a metric $\tilde{g}_{ij}(t_0,.)$ such that for some $\delta >0$ the function
\begin{equation}
x\rightarrow \tilde{g}_{ij}(t_0,x)
\end{equation}
where
\begin{equation}
\tilde{g}_{11}=g_{11}+\delta \cdot\phi_{\delta,2\delta}(x^1)f(x^1)
\end{equation}
stays in the neighborhood and such that the scalar curvature blows up at the origin and is well-defined in the complement of the origin.  Note that we may choose 
$g_{\mu\nu}(0)=\eta_{\mu\nu}>0$ for all 
$\mu,\nu$ such that the inverse of the metric tensor is well defined in a neighborhood of the Cauchy surface and Lipschitz continuity
 of the metric tensor components $g_{\mu\nu}$ implies Lipschitz continuity of the components of the inverse. Here $\eta_{\mu\nu}$ denotes the Lorentz metric.
\end{proof}
Constructions as in (\ref{mainprop}) can be used in order to define data $g_{\mu\nu}(0,.)$ on a Cauchy surface in a subspace $C^{1,\delta}\cap H^2$. We may choose data which are smooth in the complement of the origin, such that curvature invariants blow up at the origin and are well-defined elswhere. There are two additional constraints in the case of the Einstein field equation. First - as is well known - additional constraint equations have to be imposed to make the Cauchy form of the Einstein field equations well-defined. Second, if we want to prove local time existence by local contraction using viscosity limits of extended systems based on the harmonic field equations, then it is necessary to impose additional constraints on the data such that a) the harmonic field equations are well defined and b) a unique Lorentz signature is well defined in the neighborhood of the Cauchy surface. We shall formalize the items a) and b) in the next section. The construction of a local-time contraction is then based on and extended system where viscosity terms $\nu_0 \Delta g_{\mu,\nu}$, $\nu_0 \Delta g_{\mu,\nu,k}$ and $\nu_0 \Delta h_{\mu,\nu}$ with a positive viscosity $\nu_0>0$ are added for $0\leq \mu,\nu \leq n$ and $1\leq k\leq n$.


The upshot of the following considerations is as follows. In the following section we define an appropriate data class (satisfying a uniform Lorentz signature condition and certain integrability conditions) and state the main theorem concerning the existence of time local solutions of the Einstein field equations with strong singularities in the sense of definition \ref{ss}. Here the constraint equations for the harmonic field equations hold in a weak sense at the origin and in the classical sense elsewhere. This result implies that there are counter examples to various interpretations of the strong or weak cosmic censorship hypotheses. Moreover, as we have already remarked, the singularities are stable since classical solutions can ve separated by a $L^p$ ball of solutions with singular curvature invariants.  Note that the  time-local existence theorem is not subsumed by  \cite{HKM}. As a consequence there is also a counter example of the weak cosmic censorship. In the last section we prove the theorem.

\section{Construction of a class of unshielded singular solutions of the harmonic field equation}

Next we are concerned with the main theorem which asserts the existence  of unshielded (naked) singular solutions of the harmonic field equations. We construct a spatially global and time-local solution $g_{\mu\nu},~ 0\leq\mu,\nu\leq n$ on a domain $(-T,T)\times {\mathbb R}^n$ for data $g_{0\mu\nu} \in C^{1,\delta}\cap H^{\frac{n}{2}+1}$ on a Cauchy surface which are smooth in the complement of the origin and such that the functions $g_{\mu\nu}(t,.),~0\leq \mu,\nu \leq n$ are in the Sobolev space $C^2\cap H^{\frac{n}{2}+1}$ for $t\in (-T,0)\cup (0,T)$. The initial data functions have a fixed Lorentz signature on the Cauchy surface. Recall that a Cauchy surface is spacelike, where we may assume that it is given at $x_0=t=0$. We assume that the Lorentz signature remains constant in a neighborhood of the Cauchy surface, i.e., for some $T>0$ and on a domain $(-T,T)\times {\mathbb R}^n$ we have
\begin{equation}
g_{\mu\nu}=\left(S^T\Lambda S \right)_{\mu\nu}, 
\end{equation}
where
\begin{equation}
\begin{array}{ll}
\Lambda=\mbox{diag}((\lambda_j)_{0\leq j\leq n}),~\mbox{sign}(\lambda_0)=-1,~\mbox{sign}(\lambda_j)=1~\mbox{for}~ 1\leq j\leq n.
\end{array}
\end{equation}
The choice of the sign for the time coordinate is a convention, of course.

We can reexpress this condition in terms of the metric tensor.
Note that the $2$-covariant components of the line element $ds^2= g_{ij}dx^idx^j$ determine a bilinear form 
\begin{equation}
g:TM\times TM\rightarrow {\mathbb R},
\end{equation}
on the tangential bundle $TM$ of space-time $M$, where around each point of space time $(t,x)$ we have  local neighborhood $U\subset M$ and $V\subset {\mathbb R}^n$ and  coordinate transformation $i:V\rightarrow U$ with $i(s,y)=(t,x)$ such that for  certain scale with light velocity $c=1$ we have
\begin{equation}\label{gtx}
g_{(t,x)}\circ i(s,y)(z,z)
=
-z_{i_0}^2+\sum_{i\in \left\lbrace 1,\cdots,n \right\rbrace \setminus \left\lbrace i_0\right\rbrace}z_i^2,
\end{equation}
where $i_0$ denotes the index of time. We may also write $g_{(t,x)\mu\nu}\circ i(s,y)(z,z)=\delta_{\mu\nu}z_{\mu}z_{\nu}$. We have chosen $i_0=0$, but this is just by convention, i.e., not determined by the equation itself. For the simple choice of a Cauchy surface $H$ we may choose a corresponding function $i_{H}$ (instance  of the function $i$ in (\ref{gtx})) and then stipulate that the Lorentz signature is strictly constant in the sense that for all $x\in {\mathbb R}^n$ and $z\neq 0$
\begin{equation}\label{gf}
 x\rightarrow \mbox{sign}(g_{(0,x)}\circ i_{H}(0,x)(z,z))
\end{equation}
is a constant function, where for all $x\in {\mathbb R}^n$ and for all $z\neq 0$ we have $g_{(0,x)}\circ i_{H}(0,x)(z,z)\neq 0$. 


In this context we note that we assume a uniform Lorentz condition on the Cauchy surface such that a solution in the vicinity of the Cauchy surface with conatnt Lorentz signature can be constructed. At time $t_0=0$ we may consider diagonalizations of the symmetric matrix $(g_{\mu\nu})(0,.))$  
\begin{equation}
g_{\mu\nu}=\left(S^T\Lambda S \right)_{\mu\nu}, 
\end{equation}
where
\begin{equation}
\begin{array}{ll}
\Lambda=\mbox{diag}((\lambda_j)_{0\leq j\leq n}),~\mbox{sign}(\lambda_0)=-1,~\mbox{sign}(\lambda_j)=1~\mbox{for}\leq j\leq n.
\end{array}
\end{equation}
For asymptotically flat spaces (cf. below) such a unform Lorentz condition is naturally satisfied at spatial infinity. 
Otherwise we may have that $\lambda_i(0,x)\downarrow 0$ as $|x|\uparrow \infty$. 
Then a uniform lorentz condition, say with with $|\lambda_0(0,x)|<\sum_{i=1}^n|\lambda_i(0,x)|$ still holds if $\inf_{x\in {\mathbb R}^n}\frac{\sum_{i=1}^n\lambda_i(0,x)}{|\lambda_0(0,x)|}\geq c>1$
 for some constant $c>0$. We speak of an uniform Lorentz condition in either case.
Consider the third equation in (\ref{harm}) extended by a viscosity term $\nu_0\Delta$ (where $\Delta$ denotes the Laplacian and $\nu_0$ is a positive real viscosity constant), i.e., the equation
\begin{equation}  
\frac{\partial h^{(\nu_0)}_{\mu\nu}}{\partial t}=\nu\Delta h^{(\nu_0)}_{\mu\nu}-g^{(\nu_0)}_{00}\left(2g^{(\nu_0)0k}\frac{\partial h^{(\nu_0)}_{\mu\nu}}{\partial x^k}+g^{(\nu_0)km}\frac{\partial g^{(\nu_0)}_{\mu\nu,k}}{\partial x^m} -2H^{\nu_0}_{\mu\nu}\right),
\end{equation}
where
\begin{equation}\label{hmunu0}
\begin{array}{ll}
H^{(\nu_0)}_{\mu\nu}\equiv H_{\mu\nu}\left(g^{(\nu_0)}_{\alpha\beta},\frac{\partial g^{(\nu_0)}_{\alpha\beta}}{\partial x^{\gamma}} \right)
\end{array}
\end{equation}
and $H_{\mu\nu}$ is defined in (\ref{hmunu}) below. Here we put the viscosity constant $\nu_0$ into brackets in the supperscript in order to avoid confusion with a running index. A local time solution $g^{(\nu_0)}_{\mu\nu},0\leq \mu,\nu\leq n\in C^{1,2}\left((0,T)\times {\mathbb R}^n\right) $ has the representation
\begin{equation}
\begin{array}{ll}
  h^{(\nu_0)}_{\mu\nu}=h^{(\nu_0)}_{\mu\nu}(0,.)\ast_{sp}G_{\nu_0}
  +-g^{(\nu_0)}_{00}\left(2g^{(\nu_0)0k}\frac{\partial h^{(\nu_0)}_{\mu\nu}}{\partial x^k}\right)\ast G_{\nu_0}\\
  \\
 + \left( g^{(\nu_0)km}\frac{\partial g^{(\nu_0)}_{\mu\nu,k}}{\partial x^m}\right) 
 \ast G_{\nu_0} -\left( 2H^{\nu_0}_{\mu\nu}\right)\ast G_{\nu_0}
  \end{array}
\end{equation}
where $G_{\nu_0}$ is the fundamental solution of a heat equation with viscosity constant $\nu_0$ (cf. below) and $\ast_{sp}$ and $\ast$ denote spatial and space-time convolutions respectively. For solutions which vanish at spatial infinities such representations can be rewritten by partial integration and using the convolution rule such that the convoluted functions are terms built by products of the metric components $g^{(\nu_0)}_{\mu\nu}$ its spatial and time derivatives $g^{(\nu_0)}_{\mu\nu,k}$ and $h^{(\nu_0)}_{\mu\nu}$ and corresponding entries of the inverse matrices $g^{(\nu_0),\mu\nu}$, $g^{(\nu_0),\mu\nu}_{,k}$,   and $h^{(\nu_0),\mu\nu}$, and where convolutions involve the Gaussian $G_{\nu_0}$ and first order spatial derivatives of the Gaussian. 
In order to have nontivual solutions of the constraint equations it is desirable to include asymptotically flat spaces into the set of admissible data. Recall that asymptotically flat means that 
the metric $g=\left(g_{\mu\nu}\right)$ has an asymptotic behavior of Schwarzschild type, i.e.,
\begin{equation}
g_{\mu\nu}\sim \eta_{\mu\nu}+\frac{2m}{r}\delta_{\mu\nu},~\mbox{as $r\uparrow \infty$},
\end{equation}
where $m>0$ is a positive constant.
More precisely, for our purposes we may define$H^s$-asmuptotically flat spaces in the following sense.
\begin{defi}\label{defidata}
For a constant $m>0$ and $S>0$ a space time with metric $g_{\mu\nu}$ is called asymptoctically $H^s$-flat
if  for all time $t$ and all $0\leq \mu,\ne\leq n$ we have
\begin{equation}
\delta g_{\mu\nu}(t,.):= g_{\mu\nu}(t,.)-\eta_{\mu\nu}-\frac{2m}{r}\delta_{\mu\nu}\in H^s
\end{equation}
If the latter constion holds for some time $t_0$ on a Cauchy surface, then we say that the metric data are asymptotically flat on this Cauchy surface. 
\end{defi}
In the following we may assume that a Cauchy surface is given at $t_0=0$.
Recall that for two scalar spatial functions
\begin{equation}
\mbox{ if }f,g\in H^s~\mbox{ for }s>\frac{n}{2}~\mbox{ then }fg\in H^s.
\end{equation}
Hence it is natural to require
\begin{equation}\label{idata1}
\delta g^{(\nu_0)}_{\mu\nu}(0,.)=\delta g^{(\nu_0)}_{\mu\nu}(0,.)\in H^{s+1}\cap C^{s+1}~\mbox{ for some $s>\frac{n}{2}$}
\end{equation}
and 
\begin{equation}\label{idata2}
\delta h^{(\nu_0)}_{\mu\nu}(0,.)=\delta h_{\mu\nu}(0,.)\in H^{s}~\mbox{ for some $s>\frac{n}{2}$}.
\end{equation}
\begin{rem}
The requirements in (\ref{idata}) and (\ref{idata2}) are stronger than needed (cf. \cite{KRS}). However, for our purposes these assumptions are sufficient, and simplify the proof. 
\end{rem}
In addition we require that there is a ball (with respect to supremum norm) of constant Lorentz signature around the initial data, where in case $n=3$ the Lorentz signature is $(-+++)$ and sinilarly for $n>3$. 
\begin{defi}\label{initdatadef}
We call a list of data $g^{(\nu_0)}_{\mu\nu}(0,.),~0\leq \mu,\nu\leq n$ admissible if a uniform Lorentz condition is satisfied on the Cauchy surface (initial data surface), and if in additions the conditions (\ref{idata1}), (\ref{idata2}) are satisfied. We remark that this definition is given with respect to the hyperplane $x_0=t_0=0$, but the form of the statement is chosen such it can be generalised to spacelike Cauchy surfaces straightforwardly. In general  for initial time $t_0$ we define for  $s>0$, and finite constants $c_1,c_2 \geq 0$ the set
\begin{equation}
AD^s_{c_1c_2}:=\lbrace g_{0\mu\nu}|\delta g_{0\mu\nu}(t_0,.):= g_{\mu\nu}(t_0,.)-c_1\eta_{\mu\nu}-c_2\frac{2m}{r}\delta_{\mu\nu}\in H^s\rbrace,
\end{equation}
where  $AD^s_{c_1c_2}$ is called admissible if $s>\frac{n}{2}+1$.
\end{defi}
Admissible data lists are abundant, although they are a bit specific concerning the behaviour at spatial infinity. These requirements allow us to concentrate on the construction of naked singularity, where the requirements at spatial infinity are similar to artificial boundary conditions, which facilitate the prove of the local contraction theorem.  
Note that in the given setting the harmonic field equations can be subsumed by a class of quasilinear hyperbolic equations of second order. There is a local existence theory for such type of equations (cf. \cite{HKM}), which proves the existence of a time-local and unique solution up to a small time horizon $T>0$ for regular data, where the components of the metric 
related assumptions for the first order spatial and time derivatives). However, this local existence result cannot be applied in our situation, because we have only $C^{1,\delta}$ regularity at one point and for some examples  in this class second order derivatives may be 
even not integrable. There are two possible reasons for time locality of existence theorems. One possible reason is that a solution 'develops' singularities after finite time. The other reason is that the metric does not satisfy a Lorentz condition after some time, and then 
cannot be subsumed under the type of quasilinear hyperbolic systems for which the local existence results are proved.  However, if a strict Lorentz condition is satisfied uniformly on a Cauchy surface $C$ at time $t=t_0$, then it holds in a neighborhood for time  $t\in (t_0-\rho,t_0+\rho)$ for some $\rho >0$. 

The uniform Lorentz condition above ensure that this requirement for the existence of a local solutions satisfied. The harmonic form of the Einstein field equations confirm time symmetry as a natural property of the theory. For this reason of time symmetry (which we find in many hyperbolic equations of mathematical physics) we can build in a weak singularity (here a curvature singularity) into the initial data on the Cauchy surface and then show that there exists a time-local solution. This local solution can be extended locally to past time and to future time. An alternative approach is to start with smooth data on the Cauchy surface, and then show that a singularity can develop at the tip of a cone. We considered such a construction for the vorticity form of the incompressible Euler equation elsewhere. Note that the alternative construction considered here may be applied to the Euler equation as well, although the Leray projection term and special form of the equation leads to a  different situation. The vorticity form of the Euler equation together with incompressibility and a different Laplacian kernel (in case $n=2$ and in case $n\geq 3$) lead to specific constraints in the case of dimension $n=2$, such that singularities can be observed for $n\geq 3$ for the Cauchy problem with regular data. In contrast for the Einstein field equation  Cauchy problems with regular data can have singular solutions in any dimension $n \geq 2$.

In the following we use the term 'classical solution of a differential equation' on a certain domain in the usual sense that a solution function satisfies the differential equation pointwise, and  where the (partial) derivatives exist in the classical Weierstrass sense. 

Finally in order to have a well-defined Cauchy problem we need that some constrain equations are satisfied. Four\`{e}s-Bruhat observed that constraint condition $\Gamma^{\mu}=0,~0\leq \mu\leq n$ on the Cauchy surface are automatically transferred to $t>0$ as long as a solution exists. Therefore solutions of the harmonic field equations $R^{h}_{\mu\nu}=0$ are well defined if the initial data constraints $G^0_{\mu}(0,x)=0,~0\leq \mu\leq n$ are satisfied where $G^0_{\mu}$ is defined via the Einstein tensor $G_{\mu\nu}$. Note that these simplified constraint equations hold in case of the vaccum field equations.
\begin{rem}
In the presense of matter the general constraint equations take the form
\begin{equation}
G_{\mu 0}=\kappa T_{0\mu },~0\leq \mu\leq n,~ \kappa=8\pi G.
\end{equation}
In the literature these equations are often expressend in terms of the second fundamental form $K$ of the Cauchy surface, where the relations
\begin{equation}
\begin{array}{ll}
G_{00}=\frac{1}{2}\left(R^{\Sigma}+\left(\mbox{tr}_{\gamma}K \right)^2-|K|^2\right),\\
\\
G_{i0}=K^j_{i;j}-K^j_{j,i} 
\end{array}
\end{equation}
are used. Here, $R^{\Sigma}$ denotes the Riemann tensor on a $n$-dimensional Cauchy surface $\Sigma$, and  the second fundamental form $K$ has the components 
\begin{equation}
K_{ij}:=K(\partial_i,\partial_j)=\frac{1}{2}\left(N^{-1}\left(\partial_i\gamma_{ij}-L_X\gamma_{ij}\right)  \right) 
\end{equation}
with respect to the metric
\begin{equation}
g=-Ndt^2+\gamma_{ij}\left(dx^i+X^idt \right)\left(dx^j+X^jdt  \right), 
\end{equation}
(Einstein summation) and with respect to the Lie derivative
\begin{equation}
L_X\gamma_{ij}=\nabla_iX_j+\nabla_jX_i,~(\nabla_k\mbox{ denotes the covariant derivative})
\end{equation}
 (cf. \cite{CF}, especially the first article in this book for a general treatment of the constraint equations and its solutions). In this paper there are only two issues concerning the constraint equations: a) if we define singular data, i.e., data with one singular point, then they should be defined such that the constraint equations are satisfied on the Cauchy surface in the complemetary domain of the singular point; b) the constraint equations should be satisfied on the whole Cauchy surface $t_0=0$ auch that the the result of Four\`{e}s-Bruhat is satisfied, i.e., such that the constined equations hold on the Cauchy surfaces at $t\in(0,T]$. We shall observe that both requirements are satisfied below.
\end{rem}

Since the construction of naked or unshielded singularities in \cite{Cr1} is unstable as shown in \cite{Cr2}we are interested in stable singularities of solutions. For a given bounded open set $U\subset {\mathbb R}^n$ with respect to the standard topology let $C_b(U)$ denote the space of bounded continuous functions.

\begin{defi}\label{defising}
A singularity at $(t_0,x)$ of a solution $g_{\mu\nu},~ 0\leq \mu,\nu \leq n$ is called generically stable in some $L^p$-sense for some $p\geq 1$ if for any bounded open neighborhood $x\in U\subset {\mathbb R}^n$ and any given bounded continuous  scalar function $f\in C_b,~ r\rightarrow f(r),~ r=\sqrt{\sum_{j=1}^nx_j^2}$  the scalar curvature $R$ of $g_{\mu\nu},~0\leq \mu,\nu\leq n$ at time $t_0$ is seperated from $f$ by an $L^p$-ball   in the sense that for all $c>0$ there exists a $p\geq 1$ such that
\begin{equation}
\int_U|R(x)-f(r)|^pdx\geq c.
\end{equation} 
\end{defi}

Some notation before we state the main theorem:
we write
\begin{equation}
g_{0\mu\nu}(0,.)\in C^{\infty}\left({\mathbb R}^n\setminus \left\lbrace (0,0)\right\rbrace  \right)\mbox{ if $g$ is smooth at all}~y\in {\mathbb R}^n\setminus \left\lbrace (0,0)\right\rbrace.
\end{equation}

We have
\begin{thm}\label{mainthm1}
Consider the case $n=3$ (for generalisations to $n>3$ cf. remark \ref{ngreater3} below). For a list of admissible data functions (cf. the definition in (\ref{defidata}) above)
\begin{equation}
\begin{array}{ll}
g_{\mu\nu}(0,.)=g_{0\mu\nu}:{\mathbb R}^n\rightarrow {\mathbb R},~g_{\mu\nu,t}(0,.)=h_{0\mu\nu}:{\mathbb R}^n\rightarrow {\mathbb R},\\
\end{array}
\end{equation}
where
\begin{equation}
\begin{array}{ll}
g_{0\mu\nu}(0,.)\in C^{\infty}\left({\mathbb R}^n\setminus \left\lbrace (0,0)\right\rbrace  \right)\cap C^{1,\delta}\left({\mathbb R}^n\right) \\
\end{array}
\end{equation}
there is a time-local solution to the Cauchy problem 
\begin{equation}\label{harm2}
\left\lbrace \begin{array}{ll}
\frac{\partial g_{\mu\nu}}{\partial t}=h_{\mu\nu}\\
\\
\frac{\partial g_{\mu\nu,k}}{\partial t}=\frac{\partial h_{\mu\nu}}{\partial x^k}\\
\\
\frac{\partial h_{\mu\nu}}{\partial t}=-g_{00}\left(2g^{0k}\frac{\partial h_{\mu\nu}}{\partial x^k}+g^{km}\frac{\partial g_{\mu\nu,k}}{\partial x^m} -2H_{\mu\nu}\right)\\
\\
 g_{\mu\nu}(0,.)=g_{0\mu\nu},~g_{\mu\nu,t}(0,.)=h_{0\mu\nu},~g_{\mu\nu,k}(0,.)=g_{0\mu\nu,k}.
\end{array}\right.
\end{equation}
on a time horizon $[0,T]$ for some $T>0$ which is classical in the complement of the origin. Here, the harmonic term is given by
\begin{equation}\label{hmunu}
\begin{array}{ll}
H_{\mu\nu}\equiv H_{\mu\nu}\left(g_{\alpha\beta},\frac{\partial g_{\alpha\beta}}{\partial x^{\gamma}} \right)=g^{\alpha\beta}g_{\delta\epsilon}\Gamma^{\delta}_{\mu\beta}\Gamma^{\epsilon}_{\nu\alpha}
\\
\\
+\frac{1}{2}\left(\frac{\partial g_{\mu\nu}}{\partial x^{\alpha}}\Gamma^{\alpha}+g_{\nu\rho}\Gamma^{\rho}_{\alpha\beta}g^{\alpha\eta}g^{\beta\sigma}\frac{\partial g_{\eta\sigma}}{\partial x^{\mu}} +g_{\mu\rho}\Gamma^{\rho}_{\alpha\beta}g^{\alpha\eta}g^{\beta\sigma}\frac{\partial g_{\eta\sigma}}{\partial x^{\nu}}\right),
\end{array}
\end{equation}
where $\Gamma^{\alpha}$ is defined as in (\ref{gamma}).
Moreover, for admissible sets of data $g_{ij0},~1\leq i,j\leq n$ in the functions space $\left( C^{1,\delta}\cap H^s,|.|_{1,\delta}\right)$ the scalar curvature blows up at the origin $(t,x)=0$, and this singularity of space-time is generically stable in some $L^p$-sense as defined in definition \ref{defising} above. The time-local solutions constructed satisfy the usual constraint equations on Cauchy surfaces located at time $t_0>0$. The classical solution breaks down at  the origin (which is part of the boundary of the Lorentzian of the manifold). The constructed solutions for the harmonic field equations $R^h_{\mu\nu}=0$ and the constraint equations $G^0_{\mu}(0,x)=0,~0\leq \mu\leq n$ are weak solutions on the whole domain (spatially in $H^2$), where the origin is included.
\end{thm}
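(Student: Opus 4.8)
The plan is to construct the solution as a viscosity limit of the extended parabolic systems introduced above, to check that the singular structure of the data survives, and then to recover the field and constraint equations in the limit. \emph{Step 1 (viscosity family).} For each $\nu_0>0$ I would add $\nu_0\Delta$ to each of the three blocks of the first order system (\ref{harm}) and use the Duhamel representation of the $h$-component displayed above: after integration by parts its right hand side is a finite sum of products of the unknowns $g^{(\nu_0)}_{\mu\nu}$, $g^{(\nu_0)}_{\mu\nu,k}$, $h^{(\nu_0)}_{\mu\nu}$ and of the inverse metric entries, convolved with the Gaussian $G_{\nu_0}$ or with $\nabla G_{\nu_0}$. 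Solving this by the Banach fixed point theorem on $C\left([0,T_{\nu_0}],H^s\right)$ is routine, using that $H^s$ is a multiplicative algebra for $s>n/2$, that the uniform Lorentz condition together with (\ref{idata3}) keeps the inverse metric bounded and Lipschitz, and that a convolution against $\nabla G_{\nu_0}$ costs only the integrable factor $t^{-1/2}$; this yields a classical local solution $g^{(\nu_0)}_{\mu\nu}\in C^{1,2}$.

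\emph{Step 2 (uniform estimates and the limit).} The crucial point, not covered by \cite{HKM}, is to make the existence time $T$ and the norm bounds independent of $\nu_0$ and to work in the weaker class $C^{1,\delta}\cap H^2$ rather than with Lipschitz first order spatial derivatives. I would test the extended equations against the unknowns and their first derivatives; the viscosity term contributes a nonnegative dissipative term which only helps, so the differential inequality for the $C^{1,\delta}\cap H^2$ norm has $\nu_0$-independent coefficients and gives a $\nu_0$-independent $T>0$ and bound. The estimates known for convoluted data with Lipschitz first order spatial derivatives are transferred to the present class by H\"older interpolation between $H^2$ and $C^{1,\delta}$, using that the only singular contribution of the data near the origin is of the integrable type $|x|^{1-2\alpha}$ with $1-2\alpha\in(-1/2,0)$ occurring in Proposition \ref{mainprop}. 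Then Aubin--Lions compactness in time together with the $\nu_0$-uniform spatial bound extracts a subsequence $g^{(\nu_0)}_{\mu\nu}\to g_{\mu\nu}$ in $C\left([0,T],H^2_{loc}\right)$; the products pass to the limit and $g_{\mu\nu}$ solves (\ref{harm2}) with $g_{\mu\nu}(t,\cdot)\in H^2$. Since the data are $C^\infty$ off the origin, interior regularity for the approximants (equivalently, the smooth local theory applied there) shows the limit is a classical solution on the complement of the spatial origin.

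\emph{Step 3 (persistence of the singularity; constraints; weak solution).} Picking, by Proposition \ref{mainprop}, admissible data for which the scalar curvature already blows up at the origin, I would argue this cannot be smoothed instantaneously: $t\mapsto g_{\mu\nu}(t,\cdot)$ is continuous into $H^2$ with bounded Lipschitz inverse metric, and $R=g^{\mu\nu}R_{\mu\nu}$ is a fixed nonlinear differential expression that at $t=0$ is unbounded near the origin; hence the non-$C^2$ part of the second derivatives near the origin, which supplies the non-cancelling singular term in $R$, varies continuously and stays unbounded for all small $t$, giving Definition \ref{ss} along the timelike curve tracing the spatial origin, which has finite generalized affine parameter length on $[0,T]$. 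By the Four\`{e}s--Bruhat argument the harmonic constraint $\Gamma^\mu=0$ on the Cauchy surface propagates to $t>0$; since the admissible data satisfy $G^0_\mu(0,x)=0$ (classically off the origin, weakly at the origin, which is consistent as all quantities are locally $H^2$), the limit satisfies $R^h_{\mu\nu}=0$ with $\Gamma^\mu=0$, hence $R_{\mu\nu}=0$ and $G_{\mu\nu}=0$ on slices $t_0>0$. At the origin every term of (\ref{harm2}) and of the constraints is a product of $H^2_{loc}$ functions, so the identities hold in the sense of distributions there; this is the asserted weak solution on the whole domain.

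\emph{Main obstacle.} I expect the hard part to be Step 2: a $\nu_0$-independent existence time and $H^2$ bound in the $C^{1,\delta}$ class, i.e. controlling the quadratic convolution nonlinearity — especially the $\nabla G_{\nu_0}$ convolutions acting on a factor whose spatial derivative is only H\"older and in the borderline integrable regime near the origin — uniformly as $\nu_0\downarrow 0$, and then ensuring the limit keeps full $H^2$ regularity globally, including across the origin.
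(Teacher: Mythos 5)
Your overall plan—viscosity regularization, Duhamel/convolution representation, a fixed-point argument, $\nu_0$-uniform bounds, and passage to the limit—matches the paper's strategy in outline. But the realization diverges substantially, and the divergence is exactly at the place you flag as the ``main obstacle,'' which you leave unresolved. The paper does \emph{not} use an energy estimate. It sets up an explicit iteration $(g^{(\nu_0)l}_{\mu\nu},g^{(\nu_0)l}_{\mu\nu,k},h^{(\nu_0)l}_{\mu\nu})$, applies the convolution rule so that every term on the right-hand side carries only first-order spatial derivatives of the metric components (the remaining derivative being moved onto the Gaussian, giving $\star\,G_{\nu_0,k}$), and then proves a contraction estimate directly in the mixed space $C^{m,l}_{H^2,\circ}$. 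The $\nu_0$-uniformity is obtained not by dissipativity of $\nu_0\Delta$ in an energy inequality, but by two explicit computations: a symmetry/cancellation estimate of the type $|f\ast_{sp}G_{\nu_0,i}|\leq 4LC$ valid for functions with Lipschitz upper bounds (equation (\ref{visclim})), and a $\nu_0$-uniform bound on $\|G_{\nu_0}\|_{L^1}+\sum_k\|G_{\nu_0,k}\|_{L^1}$ obtained by splitting into a ball $B_{\nu_0^2}(x)$ and its complement and tracking powers of $\nu_0$. Finally the paper compactifies ${\mathbb R}^n$ via the substitution $x\mapsto\tan(x)$ and invokes a classical theorem on term-by-term differentiation of uniformly convergent functional series to pass to the viscosity-and-iteration limit, rather than Aubin--Lions.

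The gap in your proposal is Step 2. Testing the extended system against the unknowns and first derivatives is the classical Kato/HKM energy strategy, and to close it for a quasilinear second-order hyperbolic system one needs $g_{\mu\nu}(0,\cdot)\in H^s$ with $s>n/2+1$; the paper is explicit that this route is unavailable for its data class, where second spatial derivatives may be merely square-integrable (the singular term of order $|x|^{1-2\alpha}$, $\alpha\in(0.5,1)$). Your sentence ``H\"older interpolation between $H^2$ and $C^{1,\delta}$ transfers the Lipschitz estimates to the present class'' is asserted but not carried out, and it is not clear what interpolation inequality you have in mind that would replace the commutator/Moser estimates that fail at $H^2$. The paper's replacement is the convolution-rule rewriting: since only first-order spatial derivatives of $g$ ever appear inside the kernels (and these are H\"older with Lipschitz upper bounds after convolving the data once with $G_{\nu_0}$), the contraction estimate is elementary and bypasses energy estimates entirely. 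You would need either to supply the interpolation estimate you invoke, or to adopt a derivative-lowering reduction of this kind. One further point: the theorem asserts blow-up of $R$ at the single spacetime point $(t,x)=(0,0)$, which sits on the Cauchy surface; your Step 3 argues for persistence of the singularity along the worldline $t\mapsto(t,0)$, which is a stronger claim than the theorem states and is not what the paper proves.
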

\begin{rem}\label{ngreater3}
For asymptotically flat spaces the initial data include terms of Schwarzschild asymptotics $O\left( \frac{1}{r}\right)$ which are clearly not in $H^s$ for $s>\frac{n}{2}$. For $n=3$ the construction scheme below holds. For space dimension $n>3$ the existence results holds if the Scharzschild term is eliminated.    
\end{rem}
\begin{rem}
A solution $g_{\mu\nu},~0\leq \mu,\nu\leq n$ of the system (\ref{harm2}) is called 'classical' on the domain $\left(0,T\right]\times {\mathbb R}^n$ if $g_{\mu\nu}\in C^{2,2}$, where $C^{2,2}$ is the function space of twice continuously differentiable functions, where the first superscript refers to time and the second superscript refers to the spatial variables. Note that $h_{\mu\nu}=g_{\mu\nu,t}$ is encoded by the metric $g_{\mu\nu},~0\leq \mu,\nu\leq n$ such that we may refer to the metric function components alone as a solution of the system in (\ref{harm2}).  
\end{rem}

\begin{rem}
Although the target is to construct $g_{\mu\nu}\in C^{2,2}$ in the complement of the origin we note that a solution $g_{\mu\nu}\in C^{1,2}$ is a classical solution of the system in (\ref{harm2}) in the domain $(0,T]\times {\mathbb R}^n$, as it contains only spatial derivatives of the metric components $g_{\mu\nu}$ up to second order and spatial derivatives of the first order time derivatives $h_{\mu\nu}$ even up to first order. Hence, if $g_{\mu\nu}\in C^{1,2}\left((0,T]\times {\mathbb R}^n\right)$ is known to be a solution of some system equivalent to (\ref{harm2}), then the third equation in (\ref{harm2}) actually tells us that $h_{\mu\nu,t}$ is continuous $(0,T]\times {\mathbb R}^n$ such that the solution of the original system is actually in $C^{2,2}\left((0,T]\times {\mathbb R}^n\right)$.
\end{rem}

We have to mention that any solution of the Einstein field equation has to satisfy some constraint equations. Since the Einstein evolution is time reversible it is sufficient to have the  the Hamiltonian constraint equation and the momentum constraint equation satisfied at some time $t_0>0$, where the solution is regular (cf. also the remark \ref{constr} below). This can be achieved by smoothing the data such that the constraint equations are satisfied for the smoothed data. The local-time solution for smooth data then satisfies the constraint equation at each time section $t_0$ of the solution interval $[0,T]$. It is straightforward to observe that in the limit of smooth data to $H^2\cap C^{1,\delta}$ data these constraint equations are the still satisfied for $t_0>0$ and hold in $H^2$ sense at $t_0$. Note here that the singular perturbation (singular in the sense of adding a term in $C^{1,\delta}\setminus C^2$ to the initial data) in the proof of Proposition \ref{mainprop} is in a $1$-dimensional subspace.    
As the constructed metric solutions $g_{ij}$ with singular scalar curvature are bounded on the domain of well-posedness, it is clear that there are causal curves of finite g.a.p. length (in the domain where the solutions exists) which reach the point of singular scalar curvature at the origin. We get

\begin{cor}\label{CC}
 Theorem \ref{mainthm1} implies that the weak cosmic censorship in the sense of definition  definition \ref{wcc} is violated. Here, the unshielded singularity of the curvature invariant $R$ is generically $L^p$ stable in the sense of Definition \ref{defising}. Moreover it shows that the Einstein field eqiuation are not globally deterministic in $H^2$. This indicates that the principle of strong cosmic censorship also fails.
\end{cor}

\begin{rem}\label{constr}
We discussed that the Einstein evolution equation imposes constraint equations on the Cauchy surface for the sake of well-posedness. Note that these equations should be satisfied in the viscosity limit of the constructed local solution. We have a  Hamilton constraint equation and a momentum constraint equation which we can tacitly assume to be satisfied if the data are regular, i.e. in $C^2\cap H^2$ (at least). If the constraint equations are satisfied for regular data on a Cauchy surface then they are satisfied on all Cauchy surfaces of the time evolution of the Einstein field equations. We can satisfy the constaint equation for smoothed data  on the Cauchy surfaces (corresponding to time parameter $t_0$). As the smoothed data $t_0$ converge to data with property as in Theorem  \ref{mainthm1} the constrain equations remain satisfied pointwise for time parameter $t>t_0$ and in distributional sense at $t_0$. As the constraint equations are local they also remain satisfied pointwise in the complementary region of the singularity.
\end{rem}

\section{Proof of theorem \ref{mainthm1}}
The proof is based on a local contraction argument for the component functions $g_{\mu\nu}$, $g_{\mu\nu,k}$ and $h_{\mu\nu}$ in an appropriate function space. Note that the Lorentz metric is nondegenerate in the vicinity of the Cauchy surface due to the uniform Lorentz condition and that the entries of the inverse matrix of the metric tensor are regular for the list of admissible data. These admissible data lists are defined such that local the iteration procedures defined below in the vicinity of a Cauchy surface are well-defined.

We start our analysis for admissible  data $g_{0\mu\nu}\in AD^s_{00},~0\leq \mu,\nu\leq n$ for $s>\frac{n}{2}+1$. Later we generalise to the case of general admissible data in $AD^s_{c_1c_2}$ for  $c_1,c_2\geq 0$.

We choose data such that a) the constraint equations are satisfied, and b) the curvature invariant blows up. The singularity can be defined by adding a singular term around zero to an regular admissible data set in a neighborhood around the origin (as in Example \ref{mainexp} and in Proposition \ref{mainprop}). This singularity may be appear for the second order spatial derivatives of some metric component data only, i.e., in case $n=3$ we may choose for example indices $0\leq \mu,\nu\leq 2$ and data $g_{0\mu\nu}\in C^{1,\delta},~ 2\leq \mu,\nu\leq n$ and ensure that the Riemann curvature invariant $R$ blows up at the origin (as in the mentioned examples the factor $\phi_{\delta}$ in the additional term ensures that the singularity of $R$ is located in a neighborhood $B_{\delta}$, i.e., in a ball of radius $\delta >0$ around the origin). 
The other metric components then are regular. One way to observe this is the following. We have a local representation 
\begin{equation}
g_{\mu\nu}=\eta_{\mu\nu}+\delta_{\mu\nu}-\frac{1}{2}\eta_{\mu\nu}\delta,~ \delta=\delta^{\mu}_{\hspace{0.1cm}\mu},~ \delta_{\mu\nu}\ll 1,
\end{equation}
which describes the field in a small domain $B_{\delta}$.
The Hilbert gauge
\begin{equation}
\delta^{\alpha\beta}_{\hspace{0.3cm},\beta}=0
\end{equation}
implies that the perturbations satisfy the equations
\begin{equation}\label{loceq}
\square \delta_{\mu\nu}=-16\pi G T_{\mu\nu}.
\end{equation}
Standard regularity results of the Poisson equation then imply that for $2\leq \mu,\nu\leq3$ the metric components are regular, i.e., $g_{\mu\nu}\in H^2\cap C^2$. This construction shows a) that singularities may be restricted to some submanifolds, and b) that  constraint equations can be satisfied where  the Riemann curvature invariant $R$  blows up. 
In the following we assume $T_{\mu\nu}\equiv 0$ without loss of generality.

Since the initial data have a weak regularity at one point we do not claim uniqueness but construct a local time solution (branch).
We construct local solutions via viscosity limits $\nu_0\downarrow 0$ of local solutions of the extended system ($\nu_0>0$ a small positive 'viscosity' parameter)
\begin{equation}\label{harm3}
\left\lbrace \begin{array}{ll}
\frac{\partial g^{(\nu_0)}_{\mu\nu}}{\partial t}=
\nu_0 \Delta g^{(\nu_0)}_{\mu\nu}+h^{(\nu_0)}_{\mu\nu},\\
\\
\frac{\partial g^{(\nu_0)}_{\mu\nu,k}}{\partial t}=\nu_0 \Delta g^{(\nu_0)}_{\mu\nu,k}+\frac{\partial h^{(\nu_0)}_{\mu\nu}}{\partial x^k},\\
\\ 
\frac{\partial h^{(\nu_0)}_{\mu\nu}}{\partial t}=\nu_0 \Delta h^{(\nu_0)}_{\mu\nu}\\
\\
-g^{(\nu_0)}_{00}\left(2g^{(\nu_0)0k}\frac{\partial h^{(\nu_0)}_{\mu\nu}}{\partial x^k}+g^{(\nu_0)km}\frac{\partial g^{(\nu_0)}_{\mu\nu,k}}{\partial x^m} -2H^{(\nu_0)}_{\mu\nu}\right),\\
\\
 g^{(\nu_0)}_{\mu\nu}(0,.)=g_{0\mu\nu},~g^{(\nu_0)}_{\mu\nu,t}(0,.)=h_{0\mu\nu},~g^{(\nu_0)}_{\mu\nu,k}(0,.)=g_{0\mu\nu,k}.
\end{array}\right.
\end{equation}
For a function  $f:(0,\rho)\times {\mathbb R}^n\rightarrow {\mathbb R}$ on some time horizon $\rho >0$ we abbreviate
\begin{equation}
f\ast G_{\nu_0}=\int_0^.\int_{{\mathbb R}^n}f(s,y)G_{\nu_0}(.-s,.-y)dyds
\end{equation}
for the convolution with the Gaussian $G_{\nu_0}$, and for a function $f_0:{\mathbb R}^n\rightarrow {\mathbb R}$ we write
\begin{equation}
f_0\ast_{sp} G_{\nu_0}=\int_{{\mathbb R}^n}f_0(y)G_{\nu_0}(.,.-y)dy
\end{equation}
for the convolution with the Gaussian restricted to the spatial variables.

We define an iterative scheme $g^{(\nu_0)l}_{\mu\nu},~g^{(\nu_0)l}_{\mu\nu,k},~h^{(\nu_0)l}_{\mu\nu},~0\leq \mu,\nu\leq n,~1\leq k\leq n,~l\geq 0$ with integration index $l\geq 0$.  We may initialize the scheme with
\begin{equation}
g^{(\nu_0)0}_{\mu\nu}=g_{0\mu\nu}\ast_{sp} G_{\nu_0},~g^{(\nu_0)0}_{\mu\nu,k}=g_{0\mu\nu,k}\ast_{sp} G_{\nu_0},~h^{(\nu_0)0}_{\mu\nu}=h_{0\mu\nu}\ast_{sp} G_{\nu_0},
\end{equation}
where $G_{\nu_0}$ is the fundamental solution of
\begin{equation}
G_{\nu,t}-\nu \Delta G_{\nu_0}=0.
\end{equation}
For $l\geq 1$ the functions $g^{(\nu_0)l-1}_{\mu\nu},~g^{(\nu_0)l-1}_{\mu\nu,k},~h^{(\nu)l-1}_{\mu\nu},~0\leq \mu,\nu\leq n,~1\leq k\leq n$ are given, and the functions $g^{(\nu_0)l}_{ij},~g^{(\nu_0)l}_{ij,k},~h^{(\nu)l}_{ij},~0\leq \mu,\nu\leq n,~1\leq k\leq n$ are defined iteratively as solutions of the equation
\begin{equation}\label{harm3l}
\left\lbrace \begin{array}{ll}
\frac{\partial g^{(\nu_0)l}_{\mu\nu}}{\partial t}=\nu_0 \Delta g^{(\nu_0)l}_{\mu\nu}+h^{(\nu_0)l-1}_{\mu\nu},\\
\\
\frac{\partial g^{(\nu_0)l}_{\mu\nu,k}}{\partial t}=\nu_0 \Delta g^{(\nu_0)l}_{\mu\nu,k}+\frac{\partial h^ {(\nu_0)l-1}_{\mu\nu}}{\partial x^k},\\
\\ 
\frac{\partial h^{(\nu_0)l}_{\mu\nu}}{\partial t}=\nu_0 \Delta h^{(\nu_0)l}_{\mu\nu}\\
\\
-g^{(\nu_0)l-1}_{00}\left(2g^{(\nu_0)l-1,0k}\frac{\partial h^{(\nu_0)l-1}_{\mu\nu}}{\partial x^k}+g^{(\nu_0)l-1,km}\frac{\partial g^{(\nu_0)l-1}_{\mu\nu,k}}{\partial x^m} -2H^{(\nu_0)l-1}_{\mu\nu}\right),\\
\\
 g^{(\nu_0)l}_{\mu\nu}(0,.)=g_{0ij},~g^{(\nu_0)l}_{\mu\nu,t}(0,.)=h_{0ij},~g^{(\nu_0)l}_{\mu\nu,k}(0,.)=g_{0\mu\nu,k},
\end{array}\right.
\end{equation}
where
\begin{equation}
\begin{array}{ll}
H^{(\nu_0)l-1}_{\mu\nu}\equiv H^{(\nu_0)l-1}_{\mu\nu}\left(g^{(\nu_0)l-1}_{\alpha\beta},\frac{\partial g^{(\nu_0)l-1}_{\alpha\beta}}{\partial x^{\gamma}} \right)\\
\\
=g^{(\nu_0)l-1,\alpha\beta}g^{(\nu_0)l-1}_{\delta\epsilon}\Gamma^{(\nu_0)l-1,\delta}_{i\beta}
\Gamma^{(\nu_0)l-1,\epsilon}_{j\alpha}
\\
\\
+\frac{1}{2}{\Big(}\frac{\partial g^{(\nu_0)l-1}_{ij}}{\partial x^{\alpha}}\Gamma^{(\nu_0)l-1,\alpha}+g^{(\nu_0)}_{j\rho}\Gamma^{(\nu_0)l-1,\rho}_{\alpha\beta}g^{(\nu_0)l-1,\alpha\eta}g^{(\nu_0)l-1,\beta\sigma}\frac{\partial g^{(\nu_0)l-1}_{\eta\sigma}}{\partial x^i}\\
\\
 +g^{(\nu_0)l-1}_{i\rho}\Gamma^{(\nu_0)l-1,\rho}_{\alpha\beta}g^{(\nu_0)l-1,\alpha\eta}g^{(\nu_0)l-1,\beta\sigma}\frac{\partial g^{(\nu_0)l-1}_{\eta\sigma}}{\partial x^j}{\Big )}.
\end{array}
\end{equation}
Here,
\begin{equation}
\Gamma^{(\nu_0)l-1,\mu}=g^{(\nu_0)l-1,\alpha\beta}\Gamma^{(\nu_0)l-1,\mu}_{\alpha\beta},
\end{equation}
and
\begin{equation}
\Gamma^{(\nu_0)l-1,\mu}_{\alpha\beta}=\frac{1}{2}g^{(\nu_0)l-1,\mu\rho}\left(g^{(\nu_0)l-1}_{\rho\alpha,\beta}+g^{(\nu_0)l-1}_{\rho\beta,\alpha}-g^{(\nu_0)l-1}_{\alpha\beta,\rho} \right).
\end{equation}
We have the representation
\begin{equation}\label{repl}
\begin{array}{ll}
 g^{(\nu_0)l}_{\mu\nu}=g_{0\mu\nu}\ast_{sp} G_{\nu_0}+h^{(\nu_0)l-1}_{\mu\nu} \ast G_{\nu_0},~\\
 \\
 g^{(\nu_0)l}_{\mu\nu,k}=g_{0\mu\nu,k}\ast_{sp} G_{\nu_0}+\frac{\partial h^ {(\nu_0)l-1}_{\mu\nu}}{\partial x^k}\ast G_{\nu_0}\\
 \\
  h^{(\nu_0)l}_{\mu\nu}=h_{0\mu\nu}\ast_{sp} G_{\nu_0}
  -g^{(\nu_0)l-1}_{00}\times\\
  \\
  \times \left(2g^{(\nu_0)l-1,0k}\frac{\partial h^{(\nu_0)l-1}_{\mu\nu}}{\partial x^k}+g^{(\nu_0)l-1,km}\frac{\partial g^{(\nu_0)l-1}_{\mu\nu,k}}{\partial x^m} -2H^{(\nu_0)l-1}_{\mu\nu}\right)\ast G_{\nu_0}.
\end{array}
\end{equation}
The first crucial step is the convergence of the function $h^{(\nu_0)l}_{\mu\nu}$ as $l\uparrow \infty$. We obtain this convergence by a local contraction result in an appropriate function space. In order to choose this space, we first observe that it may be chosen weaker than expected. First note that for a classical solution of the Einstein evolution equation we have
\begin{equation}
g_{\mu\nu}\in C^{2,2}.
\end{equation}
We would expect that we have to construct $g^{(\nu_0)}_{\mu\nu}\in C^{2,2}$ accordingly. Nevertheless, it is sufficient to construct $g^{(\nu_0)}_{\mu\nu}$ in a subspace of $C^{1,2}$ first. In order to observe this note first that in the limit ($l\uparrow \infty$) of the iterated viscosity system we have the fixed point representation
\begin{equation}\label{repfix}
\begin{array}{ll}
 g^{(\nu_0)}_{\mu\nu}=g_{0\mu\nu}\ast_{sp} G_{\nu_0}+h^{(\nu_0)}_{\mu\nu} \ast G_{\nu_0},~\\
 \\
 g^{(\nu_0)}_{\mu\nu,k}=g_{0\mu\nu,k}\ast_{sp} G_{\nu_0}+\frac{\partial h^ {(\nu_0)}_{\mu\nu}}{\partial x^k}\ast G_{\nu_0}\\
 \\
  h^{(\nu_0)}_{\mu\nu}=h_{0\mu\nu}\ast_{sp} G_{\nu_0}\\
  \\
  -g^{(\nu_0)}_{00}\left(2g^{(\nu_0),0k}\frac{\partial h^{(\nu_0)}_{\mu\nu}}{\partial x^k}+g^{(\nu_0),km}\frac{\partial g^{(\nu_0)}_{\mu\nu,k}}{\partial x^m} -2H^{(\nu_0)}_{\mu\nu}\right)\ast G_{\nu_0}.
\end{array}
\end{equation}
The right side of (\ref{repfix}) has only spatial derivatives of the metric components $g^{(\nu_0)}_{\mu\nu}$ up to second order and first order spatial derivatives of the components $h_{\mu\nu}$, i.e., first order spatial derivatives of the first order time derivative of the metric components. We shall observe below, that for an appropriate choice of data and carefully chosen functions spaces regularity of these functions is preserved in the viscosity limit $\nu_0\downarrow 0$. Solutions by iterations of the viscosity system and their viscosity limit are naturally considered in a subspace of $C^{1,2}$ for the metric component functions $g_{\mu\nu}$ and in a subspace of $C^{0,1}$ for the metric components $h_{\mu\nu}$. Having constructed limits (iteration limit and viscosity limit) with $g_{\mu\nu}\in C^{1,2}\left((0,T)\times {\mathbb R}^n \right) $ and $h_{\mu\nu}\in C^{0,1}\left((0,T)\times {\mathbb R}^n \right)$ from the third equation of (\ref{harm2}) we get
\begin{equation}
\frac{\partial h_{\mu\nu}}{\partial t}=-g_{00}\left(2g^{0k}\frac{\partial h_{\mu\nu}}{\partial x^k}+g^{km}\frac{\partial g_{\mu\nu,k}}{\partial x^m} -2H_{\mu\nu}\right)\in C,
\end{equation}
where $C$ is the space of continuous functions. This implies that we have a classical solution.

In order to prepare this argument, we use the convolution rule and write the second equation in (\ref{repl}) in the form
\begin{equation}
g^{(\nu_0)l}_{\mu\nu,k}=g_{0\mu\nu,k}\ast_{sp} G_{\nu_0}+ h^ {(\nu_0)l-1}_{\mu\nu}\ast G_{\nu_0,k},
\end{equation}
avoiding the consideration of the convergence of spatial derivatives of $h_{\mu\nu}$, where we use later the fact that spatial first order derivatives of the Gaussian $G_{\nu_0,k}$ are locally integrable. Moreover, the convoluted metric functions and their derivatives are spatially Lipschitz in our construction.  
Note that it is essential to have convergence of the $h^{(\nu_0)l}_{\mu\nu}$ (as $l\uparrow \infty$) determined by the third equation.
We again use the convolution rule in order to avoid first order spatial derivatives of $h^{(\nu_0)l}_{\mu\nu}$ and second order spatial derivatives of the metric $g^{(\nu_0)l}_{\mu\nu}$ in the convoluted term.

We get for $l\geq 1$
\begin{equation}\label{hlij}
\begin{array}{ll}
h^{(\nu_0)l}_{\mu\nu}=h_{0\mu\nu}\ast_{sp} G_{\nu_0}-\left( g^{(\nu_0)l-1}_{00}2g^{(\nu_0)l-1,0k} h^{(\nu)l-1}_{\mu\nu}\right) \ast G_{\nu_0 ,k}\\
\\
+\left( g^{(\nu_0)l-1}_{00}2g^{(\nu_0)l-1,0k}\right)_{,k} h^{(\nu_0)l-1}_{\mu\nu}\ast G_{\nu_0 }+\left(  g^{(\nu_0)l-1}_{00} g^{(\nu_0)l-1,km}
 g^{(\nu_0)l-1}_{\mu\nu,k}\right) \ast G_{\nu_0,m} \\
\\
-\left( \left( g^{(\nu_0)l-1}_{00}  g^{(\nu_0)l-1,km}\right)_{,m}
g^{(\nu_0)l-1}_{\mu\nu,k}\right)  \ast G_{\nu_0} +2g^{(\nu_0)l-1}_{00} H^{(\nu_0)l-1}_{\mu\nu}\ast G_{\nu_0}.
\end{array}
\end{equation}
Note that by the use of the convolution rule all terms involve only first derivatives of the metric $g_{\mu\nu}$. 
Using (\ref{hlij}) we can compute the functions $h^{(\nu_0),l}_{\mu\nu},~0\leq \mu,\nu\leq n$ and consider the iteration limit $l\uparrow \infty$ and the viscosity limit $(\nu_0)\downarrow 0$. Next we consider the series $h^{(\nu_0)l}_{\mu\nu},~l\geq 1,~0\leq \mu,\nu\leq n$. In order to prove convergence in a strong function space we consider for $l\geq 2$ the functional series
\begin{equation}
h^{(\nu_0)l}_{\mu\nu}=h^{(\nu_0)1}_{\mu\nu}+\sum_{m=2}^l\delta h^{(\nu_0)m}_{\mu\nu},
\end{equation}
 where
\begin{equation}
\delta h^{(\nu_0)m}_{\mu\nu}=h^{(\nu_0)m}_{\mu\nu}-h^{(\nu_0)m-1}_{\mu\nu},
\end{equation}
and where $h^{(\nu_0)1}_{\mu\nu}$ is defined by (\ref{hlij}) applied to the data, where for $l=1$ we have  
$$
\begin{array}{ll}
g^{(\nu_0)l-1}_{\mu\nu}(0,.)=g^{(\nu_0)0}_{\mu\nu}(0,.)=g_{0ij},~g^{(\nu_0)l-1}_{\mu\nu,t}(0,.)=g^{(\nu_0)0}_{\mu\nu,t}(0,.)=h_{0ij},\\
\\
g^{(\nu_0)l-1}_{\mu\nu,k}(0,.)=g^{(\nu_0)0}_{\mu\nu,k}(0,.)=g_{0\mu\nu,k}.
\end{array}
$$

We have
\begin{equation}
\begin{array}{ll}
 \delta h^{(\nu_0)l}_{\mu\nu}=-\left( g^{(\nu_0)l-1}_{00}2g^{(\nu_0)l-1,0k} h^{(\nu)l-1}_{\mu\nu}\right) \ast G_{\nu_0 ,k}\\
\\
+\left( g^{(\nu_0)l-1}_{00}2g^{(\nu_0)l-1,0k}\right)_{,k} h^{(\nu_0)l-1}_{\mu\nu}\ast G_{\nu_0 }+\left(  g^{(\nu_0)l-1}_{00} g^{(\nu_0)l-1,km}
 g^{(\nu_0)l-1}_{\mu\nu,k}\right) \ast G_{\nu_0,m} \\
\\
-\left( \left( g^{(\nu_0)l-1}_{00}  g^{(\nu_0)l-1,km}\right)_{,m}
g^{(\nu_0)l-1}_{\mu\nu,k}\right)  \ast G_{\nu_0} +2g^{(\nu_0)l-1}_{00} H^{(\nu_0)l-1}_{\mu\nu}\ast G_{\nu_0}\\
\\
+\left( g^{(\nu_0)l-2}_{00}2g^{(\nu_0)l-2,0k} h^{(\nu)l-2}_{\mu\nu}\right) \ast G_{\nu_0 ,k}\\
\\
-\left( g^{(\nu_0)l-2}_{00}2g^{(\nu_0)l-2,0k}\right)_{,k} h^{(\nu_0)l-2}_{\mu\nu}\ast G_{\nu_0 }-\left(  g^{(\nu_0)l-2}_{00} g^{(\nu_0)l-2,km}
 g^{(\nu_0)l-2}_{\mu\nu,k}\right) \ast G_{\nu_0,m} \\
\\
+\left( \left( g^{(\nu_0)l-2}_{00}  g^{(\nu_0)l-2,km}\right)_{,m}
g^{(\nu_0)l-2}_{\mu\nu,k}\right)  \ast G_{\nu_0} -2g^{(\nu_0)l-2}_{00} H^{(\nu_0)l-2}_{\mu\nu}\ast G_{\nu_0}.
\end{array}
\end{equation}
Interpolation, i.e. subtraction and addition of mixed terms $$-\left( g^{(\nu)l-1}_{00}2g^{(\nu)l-1,0k} h^{(\nu)l-2}_{\mu\nu}\right) \ast G_{\nu ,k} \mbox{ etc.}$$ leads to a functional
\begin{equation}
\left(\delta g^{(\nu_0)l}_{\mu\nu},\delta g^{(\nu_0)l}_{\mu\nu,k},\delta h^{(\nu_0)l}_{\mu\nu} \right)=F\left(\delta g^{(\nu_0)l-1}_{\mu\nu},\delta g^{(\nu_0)l-1}_{\mu\nu,k},\delta h^{(\nu_0)l-1}_{\mu\nu},p \right) 
\end{equation}
with a 'parameter vector'
\begin{equation}
p=\left( g^{(\nu_0)l-1}_{\mu\nu},g^{(\nu_0)l-1}_{\mu\nu,k}, h^{(\nu_0)l-1}_{\mu\nu},g^{(\nu_0)l-2}_{\mu\nu}, g^{(\nu_0)l-2}_{\mu\nu,k},h^{(\nu_0)l-2}_{\mu\nu}\right) 
\end{equation}
of functions at the start of iteration step $l$. Symmetry allows us to consider indices $\mu\leq \nu$, which reduces the function space, but this does not really matter. Next we determine an appropriate function space. As we want $g_{\mu\nu}\in C^{2}$ outside the origin and where $g_{\mu\nu}$ is well defined (here $C^2$ is the function space of twice differentiable functions with continuous derivatives), our choice of a functions space close to $C^{1,2}$ for an iteration for $g_{\mu\nu}$ (resp. $g^{(\nu_0)}_{\mu\nu}$) via approximations $g^{(\nu_0),l}_{\mu\nu}$ looks rather weak, but it is fitting as the time derivative of $h_{\mu\nu}$ is defined in terms of spatial derivatives up to second order of $g_{\mu\nu}$ and spatial derivatives up to first order of $h_{\mu\nu}$. 
For space-time dimension $n+1$ we have essentially $(n+1)(n+2)/2$ metric functional increments $\delta g^{(\nu_0)l}_{\mu\nu}=g^{(\nu_0)l}_{\mu\nu}-g^{(\nu_0)l-1}_{\mu\nu}$, $n(n+1)(n+2)/2$ metric functional increments $\delta g^{(\nu_0)l}_{\mu\nu,k}=g^{(\nu_0)l}_{\mu\nu,k}-g^{(\nu_0)l-1}_{\mu\nu,k}$ and $(n+1)(n+2)/2$ increments $\delta h^{(\nu_0)l}_{\mu\nu}$. Accordingly, we define
\begin{equation}
G^{(\nu_0)}_l=(\delta g^{(\nu_0)l}_{\mu})_{0\leq \mu\leq \nu\leq n},~G^{(\nu_0)}_{l1}=(\delta g^{(\nu_0)l}_{ij})_{0\leq \mu\leq \nu\leq n,~1\leq k\leq n},
\end{equation}
and
\begin{equation}
{\cal H}^{(\nu)}_l=(\delta h^{(\nu_0)l}_{\mu\nu})_{0\leq \mu\leq \nu\leq n}.
\end{equation}
For the sake of abbreviation we write
\begin{equation}
\Omega_T:=(0,T)\times {\mathbb R}^n.
\end{equation}
Then with $d_g=(n+1)(n+2)/2$, $d_{g1}=n(n+1)(n+2)/2$,  $d_h=(n+1)(n+2)/2$ (we shall choose  $m=1$ and $l=2$ later, but for more regular data we could adopt $m$ and $l$- therefore we use a more general notation in the following) we have a 
functional
\begin{equation}\label{F}
\begin{array}{ll}
F:\left[ C_{H^2,\circ}^{m,l}\left(\Omega_T \right)\right]^{d_g}\times \left[ C_{H^2,\circ}^{m,l-1}\left(\Omega_T \right)\right]^{d_{g1}}\times \left[ C_{H^2,\circ}^{m-1,l-1}\left(\Omega_T \right)\right]^{d_{h}}\\
\\
\rightarrow \left[ C_{H^2,\circ}^{m,l}\left(\Omega_T \right)\right]^{d_g}\times \left[ C_{H^2,\circ}^{m,l}\left(\Omega_T \right)\right]^{d_{g1}}\times \left[ C_{H^2,\circ}^{m,l}\left(\Omega_T \right)\right]^{d_{h}},\\
\\
\left(G^{(\nu_0)}_l,G^{(\nu_0)}_{l1},{\cal H}^{(\nu_0)}_l\right)^T
=F\left(G^{(\nu_0)}_{l-1},G^{(\nu_0)}_{(l-1)1},{\cal H}^{(\nu_0)}_{l-1}\right) 
\end{array}
\end{equation}
with the function space $C_{H^2,\circ}^{m,l}\left(\Omega_T \right)$ which is defined
as
\begin{equation}
C_{H^2,\circ}^{m,l}\left(\Omega_T \right)=\left\lbrace f\in C_{\circ}^{m,l}\left(\Omega_T \right)|\forall t\in [0,T]:~f(t,.)\in H^2  \right\rbrace 
\end{equation}
along with the functions space  \begin{equation}\label{incremomega}
C_{\circ}^{m,l}\left(\Omega_T\right):=\left\lbrace f:\Omega_T\rightarrow {\mathbb R}|f\in C^{m,l}~\&~\forall x~f(0,x)=0 \right\rbrace.
\end{equation} 

Note that by the use of the the convolution rule we have obtained that representations of  the increments have only first order derivatives of the metric functions $g^{(\nu_0)p}_{ij}$ for some $p\geq 0$. Some of the related Gaussian $G_{\nu_0}$-terms then get first order derivatives, where these Gaussians have local standard $L^1$-estimates in the time interval $(0,T]$ (open at $0$) and on a ball around a fixed spatial argument $x$ (cf. also the proof of Lemma 3.2 below.  Similarly, for the $h^{(\nu_0),l}_{\mu\nu}$-terms. Outside the ball around a fixed argument $x$ we surely have $L^1$-estimates of the Gaussian such that we can apply Young inequalities in order to get contraction of $F$ on same time interval, i.e., we have
\begin{lem}\label{mainlem}
There is a time horizon $T>0$ such that the map $F$ is a contraction on the function space $$ \left[ C_{H^2,\circ}^{m,l}\left(\overline{D_T} \right)\right]^{d_g}\times \left[ C_{H^2,\circ}^{m,l-1}\left(\overline{D_T} \right)\right]^{d_{g1}}\times \left[ C_{H^2,\circ}^{m-1,l-1}\left(\overline{D_T} \right)\right]^{d_{h}}$$
with a contraction constant $c\in (0,1)$
\end{lem}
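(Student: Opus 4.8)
The plan is to run a standard Picard-type contraction estimate on the functional $F$ defined in \eqref{F}, exploiting that the convolution rule has been used to move every spatial derivative off the unknowns $h^{(\nu_0)l}_{\mu\nu}$ and off the second-order spatial derivatives of $g^{(\nu_0)l}_{\mu\nu}$, and onto the Gaussian kernel $G_{\nu_0}$. First I would fix the viscosity $\nu_0>0$ and record the basic kernel estimates: for a fixed ball $B$ around a spatial point $x$, the kernel $G_{\nu_0}(\cdot,\cdot)$ and its first spatial derivatives $G_{\nu_0,k}(\cdot,\cdot)$ are in $L^1$ locally in time on $(0,T]$, with $\|G_{\nu_0}(t,\cdot)\|_{L^1}=1$ and $\|G_{\nu_0,k}(t,\cdot)\|_{L^1}\leq c\,\nu_0^{-1/2}t^{-1/2}$, and the latter is integrable in $t$ on $[0,T]$, yielding bounds that vanish as $T\downarrow 0$ like $\sqrt{T}$. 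Outside the ball $B$ the Gaussian decay gives a bound of the same flavour with an extra exponentially small factor. These are exactly the ingredients needed for Young's inequality in the spatial variable followed by integration in time.

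Next I would set up the a priori bound: on a first small time interval $[0,T_0]$ one shows, using \eqref{repl}–\eqref{hlij}, the admissibility conditions \eqref{idata1}–\eqref{idata3} (which give uniform $H^2$ bounds on the data and on the entries of the inverse metric), and the algebra property that $H^s$ with $s>n/2$ is closed under products, that the iterates $(g^{(\nu_0)l}_{\mu\nu},g^{(\nu_0)l}_{\mu\nu,k},h^{(\nu_0)l}_{\mu\nu})$ stay in a fixed ball of the product space $[C^{m,l}_{H^2,\circ}]^{d_g}\times[C^{m,l-1}_{H^2,\circ}]^{d_{g1}}\times[C^{m-1,l-1}_{H^2,\circ}]^{d_h}$; here one must check in particular that the condition $f(0,x)=0$ defining $C^{m,l}_\circ$ is preserved for the increments $\delta h^{(\nu_0)l}_{\mu\nu}$ (it is, since the initial data of successive iterates coincide). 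Then for the increments I would use the interpolation (add-and-subtract the mixed terms as indicated in the text) to write $\delta h^{(\nu_0)l}_{\mu\nu}$ as a sum of convolutions, each of which is linear in one factor of the form $\delta g^{(\nu_0)l-1}_{\mu\nu}$, $\delta g^{(\nu_0)l-1}_{\mu\nu,k}$, or $\delta h^{(\nu_0)l-1}_{\mu\nu}$ and multilinear in the ``parameter vector'' $p$ of already-bounded iterates (including the inverse-metric entries, whose increments are controlled by the increments of $g$ via the resolvent identity $g^{-1}-\tilde g^{-1}=g^{-1}(\tilde g-g)\tilde g^{-1}$ together with the uniform Lorentz condition guaranteeing a uniform lower bound on $|\det|$). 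Applying Young's inequality and the kernel estimates above, each term is bounded by $c(\nu_0,R)\sqrt{T}$ times the norm of the previous increment, where $R$ is the radius of the a priori ball. Choosing $T=T(\nu_0,R)$ small enough makes the total constant $c\in(0,1)$, which is the claimed contraction. The equations for $\delta g^{(\nu_0)l}_{\mu\nu}$ and $\delta g^{(\nu_0)l}_{\mu\nu,k}$ are handled the same way (indeed more easily, since they are essentially heat semigroups applied to $\delta h$), and one checks that the output lands in the asserted, slightly more regular target space $[C^{m,l}_{H^2,\circ}]^{d_g}\times[C^{m,l}_{H^2,\circ}]^{d_{g1}}\times[C^{m,l}_{H^2,\circ}]^{d_h}$ — the gain in the second and third slots coming precisely from the smoothing of $G_{\nu_0}$ and the convolution-rule rewriting.

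The main obstacle I expect is bookkeeping the low regularity carefully enough that the contraction really closes in the stated norms. Concretely: (i) the data are only $C^{1,\delta}\cap H^2$ at one point, so one must be sure that the spatial $L^1$-bound on $G_{\nu_0,k}$ is genuinely used to absorb the ``missing'' second derivative of $g_{\mu\nu}$ and the missing first derivative of $h_{\mu\nu}$, i.e. that no term secretly requires more than first derivatives of the unknowns — this is what the convolution-rule rewriting \eqref{hlij} is for, and verifying that every one of the (finitely many) terms in $H_{\mu\nu}$ and in the principal part has been so rewritten is the delicate point; (ii) one needs the product/composition estimates in $C_{H^2,\circ}$ for the nonlinear coefficients $g^{(\nu_0)}_{00}$, $g^{(\nu_0),0k}$, $g^{(\nu_0),km}$ and their first spatial derivatives, which requires the uniform lower bound from the Lorentz condition to be propagated on $[0,T]$ — true for $T$ small by continuity, but it constrains $T$ and must be tracked; and (iii) the constants blow up as $\nu_0\downarrow 0$ (through $\nu_0^{-1/2}$), so the contraction time $T=T(\nu_0)$ shrinks — this is acceptable here since Lemma \ref{mainlem} is stated for fixed $\nu_0$, but it is the reason a separate, more delicate argument (not this lemma) is needed for the viscosity limit. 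Everything else is a routine, if lengthy, application of Banach's fixed point theorem term by term.
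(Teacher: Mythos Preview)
Your approach is essentially the same as the paper's: the paper gives only a brief sketch immediately preceding the lemma, pointing to exactly the ingredients you list---the convolution-rule rewriting \eqref{hlij} that leaves only first-order derivatives of the unknowns, local $L^1$-estimates for $G_{\nu_0}$ and $G_{\nu_0,k}$ on a ball around $x$, trivial Gaussian decay outside that ball, and Young's inequality to close the contraction for small $T$. Your fleshing-out (a priori ball, resolvent identity for the inverse metric, $H^s$ algebra property, add-and-subtract interpolation) is the standard bookkeeping the paper leaves implicit.

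The one place your expectation diverges from the paper is your point (iii): you anticipate that the contraction constant blows up like $\nu_0^{-1/2}$ through $\|G_{\nu_0,k}(t,\cdot)\|_{L^1}$, forcing $T=T(\nu_0)\downarrow 0$. The paper's very next lemma asserts the opposite---that $c$ and hence $T$ can be chosen \emph{independently} of $\nu_0$---and proves it not via the naive $L^1$ bound but via the antisymmetry of $G_{\nu_0,k}$ in $y_k$: for Lipschitz $f$ one pairs $y$ with its reflection $y^{i,-}$ to gain a factor $|y_i|$, which after rescaling $y=\sqrt{\nu_0}\,y'$ cancels the $\nu_0^{-1/2}$. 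This is not needed for Lemma~\ref{mainlem} as stated, so your proof of the present lemma is fine; just be aware that the viscosity limit is handled by a sharper kernel estimate rather than a ``separate, more delicate argument'' of a different nature.
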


The latter contraction result leads to the pointwise limit
\begin{equation}
g^{(\nu_0)}_{\mu\nu}=\lim_{l\uparrow \infty}g^{(\nu_0)l}_{\mu\nu}\in C^{1,2}\left(\left(0,T\right],{\mathbb R}  \right),
\end{equation}
where 
\begin{equation}
g^{(\nu_0)}_{\mu\nu}(0,.)\in C^{1,\delta}\left({\mathbb R}^n\right)\cap H^2.
\end{equation}
We denote
\begin{equation}
h^{(\nu_0)}_{\mu\nu}=\lim_{l\uparrow \infty}h^{(\nu_0)l}_{\mu\nu}
\end{equation}
for all $1\leq \mu,\nu\leq n$. Accordingly we write for all $1\leq i,j\leq n$
\begin{equation}
H^{(\nu_0)}_{\mu\nu}=\lim_{l\uparrow \infty}H^{(\nu_0)l}_{\mu\nu}
\end{equation}
where we recall that we have
\begin{equation}
g^{(\nu_0)}_{\mu\nu}(0,.)=g^{(\nu_0)}_{0ij},~g^{(\nu_0)}_{\mu\nu,t}(0,.)=h^{(\nu_0)}_{0\mu\nu},~g^{(\nu)}_{\mu\nu,k}(0,.)=g^{(\nu)}_{0\mu\nu,k}
\end{equation}
for the initial data (which do not depend on the iteration index $l\geq -1$. 
We observe that the contraction is essentially independent of the viscosity parameter $\nu_0$ (for small $\nu_0 >0$). This is not suprising as the density function $(t,y)\rightarrow G_{\nu_0}(t,y)=\frac{1}{\sqrt{4\pi \nu_0 t}^n}\exp\left(-\frac{|y|^2}{4\nu_0 t} \right) $ is integrable on the domain $\left((0,T]\times {\mathbb R}^n\right)$, where for $\delta\in (0,1)$ we have
\begin{equation}
{\big |}G_{\nu_0,i}(t,y){\big |}={\Big |}\frac{-2y_i}{4\nu_0 t}G_{\nu_0}{\Big |}\leq {\Big |}\frac{C}{|y|}G_{\nu_0}{\Big |}\leq \frac{C}{(\nu_0 t)^{\delta}|y|^{n+1-2\delta}}
\end{equation}
with $C=\sup_{z>0}\frac{1}{2}z^2\exp\left(-\frac{1}{4}z^2\right)$ is locally integrable for $n\geq 2$ (and $\delta >0.5$) such that the Gaussian is then easily seen to be globally integrable. We have  
\begin{lem}
The contraction constant $c\in (0,1)$ of Lemma \ref{mainlem} can be chosen independently of the viscosity constant $\nu_0 >0$.
\end{lem}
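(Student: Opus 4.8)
## Proof Plan

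The plan is to track how the contraction constant $c$ of Lemma \ref{mainlem} depends on $\nu_0$ through the convolution estimates, and to show that this dependence can be made to disappear for all sufficiently small $\nu_0>0$. The contraction estimate for $F$ is obtained by applying Young's inequality to the convolution terms appearing in the representation (\ref{hlij}) for $\delta h^{(\nu_0)l}_{\mu\nu}$ (and the analogous, simpler representations for $\delta g^{(\nu_0)l}_{\mu\nu}$ and $\delta g^{(\nu_0)l}_{\mu\nu,k}$). Those terms are of two types: convolutions $\star\, G_{\nu_0}$ with the Gaussian itself, and convolutions $\star\, G_{\nu_0,k}$ with a first spatial derivative of the Gaussian (these arose precisely from the use of the convolution rule, which was introduced to avoid second-order spatial derivatives). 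For the first type, one uses $\|G_{\nu_0}(t,\cdot)\|_{L^1(\mathbb{R}^n)}=1$ for every $t>0$, so that the time-integrated $L^1$ norm over $(0,T]$ equals $T$, \emph{independently of $\nu_0$}. For the second type, the key bound already displayed in the excerpt,
\begin{equation}
{\big |}G_{\nu_0,i}(t,y){\big |}\leq \frac{C}{(\nu_0 t)^{\delta}|y|^{n+1-2\delta}},
\end{equation}
is used with $\delta\in(0.5,1)$, and one checks that $\int_0^T\!\int_{\mathbb{R}^n}|G_{\nu_0,i}(t,y)|\,dy\,dt$ is finite; the spatial integral of $|y|^{-(n+1-2\delta)}$ (split over a unit ball and its complement, with the Gaussian decay controlling the exterior) contributes a $\nu_0$-power that exactly cancels the $(\nu_0)^{-\delta}$ prefactor, leaving a bound of the form $C'\,T^{1-\delta}$, again controlled uniformly in $\nu_0$ for small $\nu_0$.

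The steps I would carry out are as follows. First, revisit the derivation of the contraction estimate in Lemma \ref{mainlem} and isolate, term by term in $\delta h^{(\nu_0)l}_{\mu\nu}$, the constant multiplying $\|(\delta g^{(\nu_0)l-1},\delta g^{(\nu_0)l-1}_{\cdot,k},\delta h^{(\nu_0)l-1})\|$ in the chosen norm; each such constant is a product of (i) an $L^\infty$ or $H^2$ bound on the ``parameter'' functions $g^{(\nu_0)l-1}_{\mu\nu}$, their inverses, and derivatives — which, by the a priori bounds built into the admissible-data class (conditions (\ref{idata1})–(\ref{idata3}) and the uniform Lorentz condition) are bounded uniformly in $l$ and in $\nu_0$, since convolution with $G_{\nu_0}$ does not increase $L^\infty$ or $H^2$ norms — and (ii) a Young-inequality factor which is either $T$ (Gaussian terms) or $C'T^{1-\delta}$ (Gaussian-derivative terms). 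Second, assemble these into a single estimate of the form $c\le K\,(T+T^{1-\delta})$ with $K$ independent of $\nu_0$. Third, choose $T>0$ small enough that $K(T+T^{1-\delta})<1$; this choice of $T$, and the resulting $c\in(0,1)$, are then manifestly independent of $\nu_0$. One should also verify that the product/composition estimates $fg\in H^s$ for $f,g\in H^s$, $s>n/2$, used to control the $H^2$-valued norms, come with constants depending only on $s$ and $n$, not on $\nu_0$.

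The main obstacle I expect is the careful bookkeeping of the Gaussian-derivative convolution estimate and its uniformity: one must confirm that the singular factor $(\nu_0 t)^{-\delta}$ genuinely integrates against the spatial kernel to produce a net power of $\nu_0$ that is nonnegative (so that small $\nu_0$ is harmless), and that the estimate is uniform on balls around a fixed spatial point together with a complementary exterior estimate, as flagged in the paragraph preceding Lemma \ref{mainlem}. A secondary subtlety is ensuring the a priori bounds on the iterates $g^{(\nu_0)l}_{\mu\nu}$ and on the entries of the inverse metric $g^{(\nu_0)l,\mu\nu}$ — in particular the lower bound away from degeneracy coming from the uniform Lorentz condition — are preserved along the iteration with constants not deteriorating as $\nu_0\downarrow 0$; this follows because the smoothing operator $\star_{sp}G_{\nu_0}$ is an $L^\infty$-contraction and an $H^2$-contraction, but it must be stated explicitly so that the fixed ball on which $F$ is a self-map can be taken independent of $\nu_0$. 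Once these two points are in place, the conclusion that $c$ may be chosen independently of $\nu_0$ is immediate.
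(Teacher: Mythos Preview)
There is a genuine gap in your argument for the Gaussian-derivative terms. Your claim that the $L^1\big((0,T)\times\mathbb{R}^n\big)$ norm of $G_{\nu_0,i}$ is bounded independently of $\nu_0$ is false: a direct scaling computation (substitute $y=\sqrt{4\nu_0 t}\,z$) gives
\[
\int_{\mathbb{R}^n}|G_{\nu_0,i}(t,y)|\,dy=\frac{c_n}{\sqrt{\nu_0 t}},\qquad
\int_0^T\!\!\int_{\mathbb{R}^n}|G_{\nu_0,i}(t,y)|\,dy\,dt=\frac{2c_n\sqrt{T}}{\sqrt{\nu_0}},
\]
which blows up as $\nu_0\downarrow 0$. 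The pointwise bound $|G_{\nu_0,i}|\le C(\nu_0 t)^{-\delta}|y|^{-(n+1-2\delta)}$ integrated over a \emph{fixed} unit ball yields $\frac{C'}{\nu_0^\delta}T^{1-\delta}$ with no compensating positive power of $\nu_0$ from the spatial integral; there is no cancellation of the sort you assert. Consequently, a pure Young-inequality bookkeeping of the form $c\le K(T+T^{1-\delta})$ with $K$ independent of $\nu_0$ cannot be obtained this way.

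What the paper does instead is to exploit the \emph{oddness} of $y_i\mapsto G_{\nu_0,i}(t,y)$ together with Lipschitz continuity of the function $f$ being convolved. Pairing $y$ with its reflection $y^{i,-}$ (the point with $i$th coordinate negated) one replaces $f(x-y)$ by $f(x-y)-f(x-y^{i,-})$, whose modulus is bounded by $2L|y_i|$. This extra factor $|y_i|$ is precisely what makes the scaling work: after $y=\sqrt{\nu_0}\,y'$ the two additional powers of $\sqrt{\nu_0}$ cancel the $1/\nu_0$ in the kernel, yielding the $\nu_0$-independent bound $|f\ast_{sp}G_{\nu_0,i}|\le 4LC$ (this is the estimate in (\ref{visclim})). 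The paper also remarks that although the first spatial derivatives of the data are only H\"older continuous at the origin, the relevant convolved quantities $g_{ij}\ast G_{\nu_0,k}$ have Lipschitz upper bounds, so the argument applies to all terms in (\ref{hlij}). Your plan needs this symmetry--Lipschitz cancellation; without it, controlling the $\star\,G_{\nu_0,k}$ terms uniformly in $\nu_0$ is not possible by kernel $L^1$ bounds alone.
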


\begin{proof}
We have observed that the essential recursive functionals in (\ref{hlij}) can be written such that the convoluted terms on the right side involve only products of metric components $g_{ij}$, their inverses and  first order spatial derivatives of such metric components or products of such metric components. Here we have rewritten the terms where a second order spatial derivative of a metric tensor component appears.  
For data $g^{(\nu_0)}_{\mu\nu}(0,.)$ or data entries of the inverse (essentially as in (\ref{dataf})) we have Lipschitz continuity, but only local Lipschitz continuity of the first spatial derivatives. Nevertheless we can prove the existence of a regular solution branch using classical solution representations of local time solutions, where in these representations we approximate all first order derivatives $g_{ij,k}$ by approximative convolutions $g_{ij}\ast G_{\nu,k}$, since the latter convolutions have Lipschitz continuous upper bounds (cf. below). Then we can consider viscosity limits. 

Representations of (approxmations) of solution functions in terms of convolutions with Lipschitz continuous functions or Lipschitz continuous upper bounds with  first order spatial derivatives of the Gaussian have the advantage that symmetry relations of the form
\begin{equation}\label{visclim}
\begin{array}{ll}
{\Big |}f\ast_{sp}G_{\nu_0,i}(s,x){\Big |}= {\Big |}\int_{{\mathbb R}^n}f(x-y)\frac{2y_i}{4\nu s\sqrt{4\pi \nu_0 s}^n}\exp\left(-\frac{|y|^2}{4\nu_0 s}\right) dy{\Big |}\\ 
 \\
 \leq \int_{\left\lbrace y|y\in {\mathbb R}^D, y_i\geq 0\right\rbrace }{\Big |}f_x(y)-f_x(y^{i,-}){\Big |}\frac{2|y_i|}{4\nu_0 s\sqrt{4\pi \nu s}^n}\exp\left(-\frac{|y|^2}{4\nu_0 s}\right) dy\\
 \\
 \leq \int_{\left\lbrace y|y\in {\mathbb R}^D, y_i\geq 0\right\rbrace }L{\Big |}2y_i{\Big |}\frac{2|y_i|}{4\nu_0 s\sqrt{4\pi \nu s}^n}\exp\left(-\frac{|y|^2}{4\nu_0 s}\right) dy\\
 \\
 =\int_{\left\lbrace y'|y'\in {\mathbb R}^D, y'_i\geq 0\right\rbrace }L{\Big |}2\sqrt{\nu_0}y'_i{\Big |}\frac{2|\sqrt{\nu_0}y'_i|}{4\pi \nu_0 s\sqrt{4\pi  s}^n}\exp\left(-\frac{|y'|^2}{4 s}\right) dy'\\
 \\
 =\int_{\left\lbrace y'|y'\in {\mathbb R}^D, y'_i\geq 0\right\rbrace }L{\Big |}2y'_i{\Big |}\frac{|2y'_i|}{4 s\sqrt{4\pi  s}^n}\exp\left(-\frac{|y'|^2}{4 s}\right) dy'\\
 \\
 =\int_{\left\lbrace y'|y'\in {\mathbb R}^D, y'_i\geq 0\right\rbrace }L\frac{4(y'_i)^2}{4 s\sqrt{4\pi  s}^n}\exp\left(-\frac{|y'|^2}{4 s}\right) dy'\leq 4LC
\end{array}
 \end{equation}
(with $y=\sqrt{\nu_0}y'$ and for some Lipschitz constant $L$ and a finite constant $C>0$ which is independent of $\nu_0$) can be used. Here, $y^{i,-}$ has the components $(y^{i,-}_j,~1\leq j\leq n$ with
\begin{equation}
y^{i,-}_j=y^{i}_j,~\mbox{ if }j\neq i~\mbox{ and }~y^{i,-}_i=y_i~\mbox{ for }~j=i.
\end{equation}
This estimate can alway be used in our situation as we have Lipschitz continuous upper bounds. In this contexts note that for data constructions as in (\ref{dataf}))
we have for $\delta >0$ small
\begin{equation}
\begin{array}{ll}
{\Big |}\left( z^{3}\sin\left(\frac{1}{z^{\alpha_0}} \right)\phi_\delta\ast_{sp}G_{\nu,1}\right) (t,x){\Big |}\\
\\
\leq 2{\big |}\int_{z>0}\left( |z-y|^{3} \phi_\delta(z-y)G_{\nu_0,1}(t,y)\right) (t,x){\big |}\leq C|z|\phi_\delta.
\end{array}
\end{equation}

Alternatively, we may use the fact that there is an $L^1\left((0,T)\times {\mathbb R}^n \right) $ upper bound of Gaussian $G_{\nu_0}$ and its first order derivatives $G_{\nu_0,k}, 1\leq k\leq n$ which are independent of the viscosity $\nu_0 >0$. 
First, for $\Delta x=x-y$ and $\Delta s=4\nu_0\Delta t$ we have for the essential factor of the Gaussian for some $C>0$ and $\delta >0$ (where $z=\frac{\Delta x}{\sqrt{\Delta s}}$)
\begin{equation}
\begin{array}{ll}
\frac{1}{\sqrt{\Delta s}^n}\exp\left(-\frac{\Delta x^2}{\Delta s} \right) = \left( \frac{\Delta x^2}{\Delta s}\right)^{\frac{n}{2}-\delta}\frac{1}{\Delta s^{\delta}|\Delta x|^{n-2\delta}} \exp\left(-\frac{\Delta x^2}{\Delta s} \right)\\
\\
\leq\frac{1}{\Delta s^{\delta}|\Delta x|^{n-2\delta}}\sup_{z\in {\mathbb R}}\left( z^2\right)^{\frac{n}{2}-\delta}\exp\left(-z^2 \right)=\frac{C}{\Delta s^{\delta}|\Delta x|^{n-2\delta}}.
\end{array}
\end{equation}
Similarly, for the first order spatial derivatives of the (essential factor of the) Gaussian we have for $\delta\in \left(\frac{1}{2},1 \right)$ in a ball $B_{R}(x)$ of radius $R>0$ the estimate 
\begin{equation}
\begin{array}{ll}
\left( \frac{1}{\sqrt{\Delta s}^n}\exp\left(-\frac{\Delta x^2}{\Delta s} \right) \right)_{,i}
\leq\frac{C}{\Delta s^{\delta}|\Delta x|^{n+1-2\delta}}.
\end{array}
\end{equation}
Hence for $T>0$ and some $c'>0$ we have
\begin{equation}
\int_0^T\int_{B_R(x)}\frac{d\Delta td\Delta x}{\Delta s^{\delta}|\Delta x|^{n+1-2\delta}}\leq \frac{c'}{\nu_0^{\delta}}|T|^{1-\delta}R^{2\delta -1}.
\end{equation}
This means that for $R=\nu_0^{2}$ we have $R^{2\delta-1}=\nu_0^{4\delta -2}$ and $\delta \in \left(\frac{2}{3},1 \right)$ we have
\begin{equation}
\frac{c'}{\nu_0^{\delta}}|T|^{1-\delta}\nu_0^{4\delta -2}= c'|T|^{1-\delta}\nu_0^{3\delta -2}\downarrow 0
\end{equation}
as $\nu_0 \downarrow 0$. Furthermore, for the complement ${\mathbb R}^n\setminus B_{R}(x)$ with $R=\nu_0^2$ we have for some finite constant $c>0$ the essential estimate
\begin{equation}
\begin{array}{ll}
{\Big |}\int_{|\Delta x|\geq \nu_0^{2}}\left(-\frac{\Delta x_k}{\nu_0 \Delta t} \right) \frac{1}{\sqrt{\nu_0\Delta t}^n}\exp\left(-\frac{\Delta x^2}{\nu_0\Delta t} \right)dy{\Big |}\\
\\
\leq {\Big |}\int_{r\geq \nu_0^{2}}\left(\frac{r}{\nu_0 \Delta t} \right) \frac{c}{\sqrt{\nu\Delta t}^n}\exp\left(-\frac{r^2}{\nu_0\Delta t} \right)r^{n-1}dr{\Big |}\\
\\
\leq {\Big |}\left(\frac{1}{2} \right) \frac{c}{\sqrt{\nu_0\Delta t}^n}\exp\left(-\frac{r^2}{\nu_0\Delta t} \right)r^{n-1}{\Big |}^{\infty}_{\nu^2_0}\\
\\
+{\Big |}\int_{r\geq \nu_0^{2}}\left(\frac{n-1}{2} \right) \frac{c}{\sqrt{\nu_0\Delta t}^n}\exp\left(-\frac{r^2}{\nu_0\Delta t} \right)r^{n-2}dr{\Big |}\downarrow 0~\mbox{as}~\nu_0\downarrow 0.
\end{array}
\end{equation}
Here we have absorbed the factor $4$ in the time variable implicitly.   
It follows that we have a uniform bound
\begin{equation}
\begin{array}{ll}
\sup_{\nu_0 >0}\left( {\big |}G_{\nu_0}{\big |}_{L^1\left((0,T)\times {\mathbb R}^n \right) }
+\sum_{k=1}^n{\big |}G_{\nu_0,k}{\big |}_{L^1\left((0,T)\times {\mathbb R}^n \right) }\right) \leq C
\end{array}
\end{equation}
for some $C>0$. 
\end{proof}
Using local contraction the viscosity limit ($\nu_0\downarrow 0$) and the iteration limit $l\uparrow \infty$ can be considered at the same time. We choose a sequence $\nu_l,~l\geq 1$ with $\lim_{l\uparrow \infty}\nu_l=0$ and consider the functional series $g^{(\nu_l),l}_{\mu\nu},~l\leq 2$, where we consider the representations
\begin{equation}
g^{(\nu_l),l}_{\mu\nu}=g^{(\nu_l),0}_{\mu\nu}+\sum_{p=2}^{l}\delta g^{(\nu_l),p}_{\mu\nu}.
\end{equation}
The componentwise differentiation (up to second order) of the limit $g_{\mu\nu}:=\lim_{l\uparrow \infty }g^{(\nu_l),l}_{\mu\nu}$ of this functional series is a delicate matter. However, we proceed as follows. First we consider functions on the domain $D_T=\left[0,T\right] \times \left]-\frac{\pi}{2},\frac{\pi}{2}\right[^n$
\begin{equation}
\begin{array}{ll}
g^{(\nu_l),l}_{\mu\nu}(.,\tan(.)):D_T\rightarrow {\mathbb R},~~
g^{(\nu_l),l}_{\mu\nu,k}(., \tan(.)):D_T \rightarrow {\mathbb R}\\
\\
g^{(\nu_l),l}_{\mu\nu,k,l}(., \tan(.)):D_T \rightarrow {\mathbb R}
\end{array}
\end{equation}
for indices $0\leq \mu,\nu\leq n$ and 'spatial indices' $1\leq k,l\leq n$. Here we denote $$\tan(x)=(\tan(x^1),\cdots ,\tan(x^n))^T.$$ Since $g^{(\nu_l),l}_{\mu\nu}(t,.)\in H^2\cap C^2$for all $t\in (0,T]$ for all these functions we have
\begin{equation}
\begin{array}{ll}
\forall t\in [0,T]:~g^{(\nu_l),l}_{\mu\nu}\left(t,\tan(-\frac{\pi}{2})\right) =g^{(\nu_l),l}_{\mu\nu}(t, \tan(\frac{\pi}{2})=0\\
\\
\forall t\in [0,T]:g^{(\nu_l),l}_{\mu\nu,k}\left(t,\tan(-\frac{\pi}{2})\right)=g^{(\nu_l),l}_{\mu\nu,k}\left(t,\tan(\frac{\pi}{2})\right)=0\\
\\
\forall t\in [0,T]:~g^{(\nu_l),l}_{\mu\nu,k,l}\left(t,\tan(-\frac{\pi}{2})\right)=g^{(\nu_l),l}_{\mu\nu,k,l}\left(t,\tan(\frac{\pi}{2})\right)=0.
\end{array}
\end{equation}
Hence we have periodic extensions
\begin{equation}
\begin{array}{ll}
d^{(\nu_l),l}_{\mu\nu}:D_T\rightarrow {\mathbb R},~~d^{\nu_l,l}_{\mu\nu}|_{D_T}=g^{(\nu_l),l}_{\mu\nu}(., \tan(.)),~d^{(\nu_l),l}_{\mu\nu}\left(t,-\frac{\pi}{2}\right)=d^{\nu,l}_{\mu\nu}\left(t, \frac{\pi}{2}\right),\\
\\
e^{(\nu_l),l}_{\mu\nu k}:D_T\rightarrow {\mathbb R},~~e^{(\nu_l),l}_{\mu\nu k}|_{D_T}=g^{(\nu),l}_{ij,k}(., \tan(.)),~e^{\nu,l}_{\mu\nu k}\left(t, -\frac{\pi}{2}\right)=e^{(\nu_l),l}_{\mu\nu k}\left(t, \frac{\pi}{2}\right),\\
\\
f^{(\nu_l),l}_{\mu\nu kl}:D_T \rightarrow {\mathbb R},~~f^{(\nu_l),l}_{\mu\nu kl}|_{D_T}=g^{(\nu)_l,l}_{ij,k,l}\left( ., \tan(.)\right) ,~f^{(\nu_l),l}_{\mu\nu kl}\left(t, -\frac{\pi}{2}\right)=f^{\nu,l}_{\mu\nu kl}\left(t, \frac{\pi}{2}\right).
\end{array}
\end{equation}
Note that we can recover the viscosity and iteration limit  $g_{ij}$ ($\nu_l\downarrow 0$, $l\uparrow \infty$) and its derivatives up to second order from the limits of the functions $d^{(\nu_l),l}_{\mu\nu}$ and $e^{(\nu_l),l}_{\mu\nu k},f^{(\nu_l),l}_{\mu\nu kl}$ ($\nu_l\downarrow 0$) respectively.  
We denote the standard closure of the domain $D_T$ by $\overline{D_T}$. We are interested in the strong convergence of the increments
\begin{equation}\label{increm}
\begin{array}{ll}
d^{(\nu_l),l}_{\circ,\mu\nu}:=d^{(\nu_l),l}_{\mu\nu}-d^{(\nu_l),l}_{\mu\nu}(0,.),~e^{(\nu_l),l}_{\circ,\mu\nu k}:=e^{(\nu_l),l}_{\mu\nu k}-e^{(\nu_l),l}_{\mu\nu k}(0,.)\\
\\
f^{(\nu_l),l}_{\circ,\mu\nu k}:=f^{(\nu_l),l}_{\mu\nu k}-f^{(\nu_l),l}_{\mu\nu k}(0,.).
\end{array}
\end{equation}
For $m=1$ and $l=2$ these increments $d^{(\nu_l),l}_{\circ,\mu\nu}$ are located in the appropriate function space
\begin{equation}\label{incremf}
C_{\circ}^{m,l}\left(\overline{D_T} \right):=\left\lbrace f:\overline{D_T}\rightarrow {\mathbb R}|f\in C^{m,l}~\&~\forall x~f(0,x)=0 \right\rbrace,
\end{equation}
and the increments $e^{(\nu_l),l}_{\circ,\mu\nu k}$ and $f^{(\nu_l),l}_{\circ,\mu\nu k}$ are located in the function spaces $C_{\circ}^{m,l-1}\left(\overline{D_T}\right)$ and $C_{\circ}^{m-1,l-1}\left(\overline{D_T} \right)$ respectively.
For functional series in this function space we may use the following classical result
\begin{thm}
Consider a functional series $F_m:=\sum_{n=1}^mf_n,~m\geq 1$ with $f_n\in C_{\circ}^{0,1}\left(\overline{D_T} \right)$. Assume that $F_m(c),~m\geq 1$ converges for fixed $c\in D_T$, and assume that the first order spatial derivative functional series $F_{m,i}:=\sum_{n=1}^mf_{n,i},~m\geq 1$ converge uniformly in $\overline{D_T}$. Then the functional series $F_m,¸ m\geq 1$ converges uniformly to a function $F=\lim_{m\uparrow \infty}\sum_{n=1}^mf_n$, and such that for all $(t,x)\in {\overline D_T}$ 
\begin{equation}
F_{,i}(t,x)=\lim_{m\uparrow\infty}\sum_{n=1}^mf_{n,i}(t,x)~\mbox{holds.}
\end{equation}
\end{thm}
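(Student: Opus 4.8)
The statement is the classical theorem on term-by-term differentiation of a series of functions; the plan is to prove it along the familiar lines, the only wrinkle being that $f_n\in C_{\circ}^{0,1}\left(\overline{D_T}\right)$ is merely continuous in $t$ and $C^1$ in the spatial variables on the box $\overline{D_T}=\left[0,T\right]\times\left]-\frac{\pi}{2},\frac{\pi}{2}\right[^n$, so the fundamental theorem of calculus may be invoked only in the spatial directions. Concretely I would: first reduce to a single spatial coordinate; second, use the fundamental theorem of calculus along that coordinate to turn uniform convergence of the derivative series into uniform convergence of $\left\lbrace F_m\right\rbrace_m$; third, pass to the limit inside that identity and differentiate once more to identify $F_{,i}$.

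For the first two steps, fix $i\in\left\lbrace 1,\dots,n\right\rbrace$ and freeze $t$ together with all spatial coordinates except $x^i$, so that each $f_n$ becomes a $C^1$ function of the single variable $\xi=x^i\in\left[-\frac{\pi}{2},\frac{\pi}{2}\right]$. Summing the one-variable fundamental theorem of calculus then gives, for a base value $\xi_0$,
\begin{equation}\label{ftcid}
F_m(t,\xi)=F_m(t,\xi_0)+\int_{\xi_0}^{\xi}F_{m,i}(t,\eta)\,d\eta .
\end{equation}
By hypothesis $\left\lbrace F_{m,i}\right\rbrace_m$ is uniformly Cauchy on $\overline{D_T}$, so the integral term in (\ref{ftcid}) is uniformly Cauchy in $m$ (the $\xi$-interval has length $\pi$); combined with convergence of $\left\lbrace F_m(t,\xi_0)\right\rbrace_m$ at a base point of each slice --- granted by the hypothesis on the face $\left\lbrace t=0\right\rbrace$, and elsewhere by Lemma \ref{mainlem}, see below --- this shows that $\left\lbrace F_m\right\rbrace_m$ is uniformly Cauchy on $\overline{D_T}$, hence converges uniformly to a continuous limit $F$. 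Running the same argument for $i=1,\dots,n$, and likewise for the higher-order difference series $e^{(\nu_l),l}_{\circ}$ and $f^{(\nu_l),l}_{\circ}$ of (\ref{increm}) in place of $d^{(\nu_l),l}_{\circ}$, covers every functional series occurring in the construction of Theorem \ref{mainthm1}.

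For the third step, set $g_i:=\lim_m F_{m,i}$, which is continuous since the convergence is uniform. Letting $m\to\infty$ in (\ref{ftcid}) --- legitimate because both sides converge uniformly --- yields $F(t,\xi)=F(t,\xi_0)+\int_{\xi_0}^{\xi}g_i(t,\eta)\,d\eta$, and differentiating this in $\xi$, now an elementary statement for the continuous integrand $g_i$, gives $F_{,i}=g_i=\lim_m\sum_{n\le m}f_{n,i}$ along with the continuity of $F_{,i}$. This is exactly the assertion.

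The one genuinely delicate point, and the reason the hypothesis must be read with care, is that uniform control of the \emph{spatial} derivative series says nothing in the $t$-direction, so the bare assumption that $\left\lbrace F_m\right\rbrace_m$ converges at a single point $c$ only produces uniform convergence on the time-slice of $\overline{D_T}$ through $c$. In the present setting this gap closes at no cost: on the face $\left\lbrace t=0\right\rbrace$ one has $f_n(0,\cdot)\equiv0$, hence convergence there, while on every other slice the contraction estimate of Lemma \ref{mainlem}, which is stated in a sup-type norm over all of $\overline{D_T}$, already forces $\left\lbrace F_m\right\rbrace_m$ to converge --- indeed uniformly --- at every $(t,x)$, so the argument around (\ref{ftcid}) can be carried out slice by slice over the whole box. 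I expect interlocking the statement correctly with Lemma \ref{mainlem}, rather than the fundamental-theorem bookkeeping, to be the only part that needs attention.
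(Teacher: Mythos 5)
The paper offers no proof of this statement at all; it is simply labelled a ``classical result'' and invoked. So there is nothing in the paper to compare your attempt against, and the attempt has to be judged on its own terms. Your fundamental-theorem-of-calculus bookkeeping in the spatial directions is the right mechanism, and the ``genuinely delicate point'' you flag is not a cosmetic worry: it is a real defect of the statement as written. With $f_n\in C_{\circ}^{0,1}\left(\overline{D_T}\right)$, taking $f_n(t,x)=a_n(t)$ with $a_n$ continuous, $a_n(0)=0$, gives $F_{m,i}\equiv 0$ (so the spatial derivative series converges uniformly and trivially), while $F_m(c)$ converges for any $c$ on the face $\left\lbrace t=0\right\rbrace$; yet $\sum_n a_n(t)$ can be made to diverge for every $t>0$, e.g.\ $a_n(t)=t/n$. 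So the hypotheses ``$F_{m,i}$ converges uniformly on $\overline{D_T}$'' together with ``$F_m(c)$ converges for a fixed $c$'' do \emph{not} imply uniform convergence of $F_m$ on $\overline{D_T}$: the spatial derivative series gives no leverage in the $t$-direction. The theorem in the form stated is false, not merely hard to prove, and your counterexample-adjacent reasoning is exactly why.

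Where your attempt comes up short is in how the gap is closed. You repair the statement by importing Lemma~\ref{mainlem} to supply, slice by slice, the missing convergence of $F_m(t,\xi_0)$ at a base point of each time slice. That is a sensible move inside the application in Section~3, but it is no longer a proof of the quoted theorem; it is a proof of a different assertion (``convergence at a base point of each slice, plus uniform convergence of the spatial derivative series, implies uniform $C^1_{\text{spatial}}$ convergence''), with the extra hypothesis silently discharged by a lemma external to the statement. Worse, Lemma~\ref{mainlem} is a contraction estimate in a sup-type norm over all of $\overline{D_T}$, so once it is invoked the uniform convergence of $F_m$ is already in hand and the quoted theorem contributes nothing beyond the final interchange $F_{,i}=\lim_m F_{m,i}$ --- which then follows from the one-variable FTC argument alone. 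The clean conclusion is: (i) the stated theorem needs a stronger pointwise hypothesis (for instance convergence of $F_m(\cdot,x_0)$ uniformly in $t$ at some fixed $x_0$, or convergence of the time-derivative series as well), and as written it is incorrect; (ii) your FTC argument, with that strengthened hypothesis, is a correct and complete proof; (iii) attributing the strengthened hypothesis to Lemma~\ref{mainlem} is legitimate in the paper's application but should be made explicit rather than left as a hypothesis-free ``classical result.''
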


\begin{lem}
There is a sequence $(\nu_l)_{l\geq 1}$ with $\lim_{l\uparrow \infty }\nu_l=0$ such that the limits of the functional increments in (\ref{increm}) are in the function space (\ref{incremf}), i.e.,
\begin{equation}
\begin{array}{ll}
d^0:=\lim_{l\uparrow \infty}d^{\nu_l,l}_{\circ,\mu\nu} ,~e^0:=\lim_{l\uparrow \infty}e^{\nu_l,l}_{\circ,\mu\nu k},~
f^0:=\lim_{l\uparrow \infty}f^{\nu_l,l}_{\circ,\mu\nu k}\in C_{\circ}^{0,1}\left(\overline{D_T} \right)
\end{array}
\end{equation}
\end{lem}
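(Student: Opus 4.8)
The plan is to combine the viscosity‑uniform contraction already established with an Arzel\`a--Ascoli compactness argument on the compactified domain $\overline{D_T}$, and then to invoke the classical termwise‑differentiation theorem stated above to pass the convergence to the first spatial derivatives. Fix $m=1$, $l=2$. For each fixed $\nu_0>0$ the map $F$ of (\ref{F}) is a contraction (Lemma~\ref{mainlem}), so the iterates converge as $l\uparrow\infty$ and the increments obey the geometric bound $\|(\,\delta g^{(\nu_0)p}_{\mu\nu},\delta g^{(\nu_0)p}_{\mu\nu,k},\delta h^{(\nu_0)p}_{\mu\nu}\,)\|\le c^{\,p-1}\|(\,\delta g^{(\nu_0)1}_{\mu\nu},\delta g^{(\nu_0)1}_{\mu\nu,k},\delta h^{(\nu_0)1}_{\mu\nu}\,)\|$ in the product space of (\ref{F}). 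Revisiting the representation (\ref{hlij}) exactly as in the proof of the preceding lemma, I would check that both $c$ and the norm of the first increment are bounded \emph{independently of} $\nu_0$: every convoluted term in (\ref{hlij}) carries only first order spatial derivatives of the metric components, and the Gaussian factors (including the $G_{\nu_0,k}$ terms) are controlled either by the $\nu_0$‑uniform $L^1$‑bound on $G_{\nu_0}$ and $G_{\nu_0,k}$ derived at the end of that proof or by the Lipschitz‑upper‑bound symmetry estimate (\ref{visclim}). Consequently the fixed points $g^{(\nu_0)}_{\mu\nu},g^{(\nu_0)}_{\mu\nu,k},g^{(\nu_0)}_{\mu\nu,k,l}$ of (\ref{repfix}) satisfy, after the $\tan$‑compactification, bounds in $C^{1,2}_{\circ}(\overline{D_T})$, $C^{1,1}_{\circ}(\overline{D_T})$, $C^{0,1}_{\circ}(\overline{D_T})$ respectively that are uniform in $\nu_0$; in particular the associated functions $d^{(\nu_0)}_{\mu\nu}$, $e^{(\nu_0)}_{\mu\nu k}$, $f^{(\nu_0)}_{\mu\nu k}$ and their first spatial derivatives are equibounded and equi‑Lipschitz on the \emph{compact} set $\overline{D_T}$, and they vanish on $\{t=0\}$ and on the faces $\{x^i=\pm\tfrac{\pi}{2}\}$.

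Next I would apply the Arzel\`a--Ascoli theorem on $\overline{D_T}$: from any sequence $\nu_0\downarrow 0$ one extracts a subsequence $(\nu_l)_{l\ge 1}$, $\nu_l\downarrow 0$, along which $d^{(\nu_l)}_{\mu\nu}\to d^0$, $e^{(\nu_l)}_{\mu\nu k}\to e^0$, $f^{(\nu_l)}_{\mu\nu k}\to f^0$ uniformly, together with uniform convergence of all first spatial derivatives $d^{(\nu_l)}_{\mu\nu,i}$, $e^{(\nu_l)}_{\mu\nu k,i}$, $f^{(\nu_l)}_{\mu\nu k,i}$ (using the equi‑Lipschitz bounds on the derivatives themselves, which come from the $C^{1,2}$/$C^{1,1}$/$C^{0,1}$ control just described). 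Writing the viscosity sequence as a telescoping series $d^{(\nu_l)}_{\mu\nu}=d^{(\nu_1)}_{\mu\nu}+\sum_{j\ge 1}\bigl(d^{(\nu_{j+1})}_{\mu\nu}-d^{(\nu_j)}_{\mu\nu}\bigr)$ with summands in $C^{0,1}_{\circ}(\overline{D_T})$, whose term‑by‑term spatial derivative series converges uniformly by the above, the classical termwise‑differentiation theorem stated earlier guarantees that $(d^0)_{,i}=\lim_l d^{(\nu_l)}_{\mu\nu,i}$, and likewise for $e^0,f^0$; hence $d^0,e^0,f^0\in C^{0,1}_{\circ}(\overline{D_T})$, since uniform limits inherit the vanishing at $t=0$ and the continuity of the first spatial derivatives.

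Finally, because the iteration contraction constant $c$ is uniform in $\nu_0$, one has $\|d^{(\nu_l),l}_{\circ,\mu\nu}-d^{(\nu_l)}_{\circ,\mu\nu}\|_{C^{0,1}(\overline{D_T})}\le c^{\,l-1}K$ for a $\nu_0$‑independent constant $K$ (and similarly for $e$ and $f$), so $c^{\,l-1}K\to 0$ forces the diagonal limits $\lim_{l\uparrow\infty}d^{(\nu_l),l}_{\circ,\mu\nu}$, $\lim_{l\uparrow\infty}e^{(\nu_l),l}_{\circ,\mu\nu k}$, $\lim_{l\uparrow\infty}f^{(\nu_l),l}_{\circ,\mu\nu k}$ to coincide with $d^0$, $e^0$, $f^0$. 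This is exactly the asserted membership in (\ref{incremf}), completing the argument.

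The step I expect to be the main obstacle is the $\nu_0$‑uniformity claimed in the first paragraph: one must verify that after the convolution rule is used in (\ref{hlij}) the only $\nu_0$‑sensitive quantities are the $L^1$‑norms of $G_{\nu_0}$ and $G_{\nu_0,k}$ over $\Omega_T$ and the localized estimates of type (\ref{visclim}), and that no algebraic manipulation reintroduces a genuine second order spatial derivative of the metric — which for data in $C^{1,\delta}\setminus C^2$ could not be bounded uniformly in $\nu_0$. A secondary technical point is the compatibility of the $\tan$‑compactification with the function spaces, i.e.\ that $g^{(\nu_0)}_{\mu\nu}(t,\tan(\cdot))$ and its spatial derivatives up to second order genuinely vanish at $x^i=\pm\tfrac{\pi}{2}$; this follows from the $H^s$‑membership with $s>\tfrac{n}{2}+1$ by Sobolev embedding into $C^1$, and is routine once stated.
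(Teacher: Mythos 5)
The paper states this lemma with no proof attached --- the reader is left to infer it from Lemma~\ref{mainlem}, the viscosity-independence lemma that follows it, and the termwise-differentiation theorem --- so your proposal is filling a genuine gap. The skeleton is right: uniform geometric decay of the iteration increments, a $\nu_0\to 0$ limit of the fixed points, the classical theorem to differentiate the series termwise, and the final splitting of $d^{(\nu_l),l}_{\circ}-d^0$ into an iteration tail and a viscosity tail.

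The genuine gap is in your Arzel\`a--Ascoli step. You claim that $e^{(\nu_0)}_{\mu\nu k}$ and $f^{(\nu_0)}_{\mu\nu k}$ ``and their first spatial derivatives are equibounded and equi-Lipschitz'' and extract a subsequence along which these first spatial derivatives converge uniformly. But $f^{(\nu_0)}_{\mu\nu k}$ is (after $\tan$-compactification) the second spatial derivative $g^{(\nu_0)}_{\mu\nu,k,l}$, and the $\nu_0$-uniform control the contraction space in (\ref{F}) provides on it (with $m=1,\ l=2$) is only $C^{0,1}$: bounds on $g^{(\nu_0)}_{\mu\nu,k,l}$ and $g^{(\nu_0)}_{\mu\nu,k,l,i}$. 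Equi-Lipschitz continuity of $f^{(\nu_0)}_{,i}$ would require a $\nu_0$-uniform bound on $g^{(\nu_0)}_{\mu\nu,k,l,i,j}$ --- one full spatial order beyond what the contraction norm carries --- and such a bound cannot hold uniformly up to the origin, since the data there lie in $C^{1,\delta}\setminus C^2$ and the whole construction is designed so that curvature blows up at that point. The same objection applies to $e^{(\nu_0)}_{\mu\nu k,i}$, which would need $g^{(\nu_0)}_{\mu\nu,k,i,j}$ uniformly Lipschitz. Without equicontinuity of these derivative families, Arzel\`a--Ascoli does not yield uniform convergence of $e^{(\nu_l)}_{\mu\nu k,i}$ or $f^{(\nu_l)}_{\mu\nu k,i}$, so the hypothesis you feed into the termwise-differentiation theorem (uniform convergence of the differentiated series) is not established and $f^0\in C^{0,1}_{\circ}(\overline{D_T})$ does not follow by this route. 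What the preceding lemmas actually prepare is a non-compactness argument: show that for each fixed iteration index $l$ the map $\nu_0\mapsto\bigl(g^{(\nu_0),l}_{\mu\nu},g^{(\nu_0),l}_{\mu\nu,k},h^{(\nu_0),l}_{\mu\nu}\bigr)$ is continuous in the $C^{0,1}$-type norm --- the convolution-rule representation (\ref{hlij}) together with the $\nu_0$-independent $L^1$-bounds on $G_{\nu_0}$ and $G_{\nu_0,k}$ and the limit $G_{\nu_0}\to\delta$ are tailored to this --- and then choose $\nu_l\downarrow 0$ so rapidly that the resulting $\nu$-increment at step $l$ is dominated by $c^{\,l}$. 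The diagonal sequence is then Cauchy in $C^{0,1}_{\circ}(\overline{D_T})$ with no appeal to compactness at a derivative order the estimates cannot reach.
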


Hence,
\begin{equation}\label{limg}
\begin{array}{ll}
 g_{\mu\nu}=\lim_{l\uparrow \infty}g^{(\nu_l),l}_{\mu\nu}\in C^{0,2}(D_T),~
g_{\mu\nu,k}=\lim_{l\uparrow \infty} g^{(\nu_l),l}_{\mu\nu,k}\in C^{0,1}(D_T),
\end{array}
\end{equation}
and, hence, for the first order time derivative, we get 
\begin{equation}\label{limh}
\begin{array}{ll}
  h_{\mu\nu}=\lim_{l\uparrow \infty}h^{(\nu_l),l}_{ij}\in C^{0,1}(D_T).
\end{array}
\end{equation}
We have to check that this limit is indeed a solution. First observe that we may consider the iteration limit ($l\uparrow \infty$) first.
For $\nu_0>0$ let $g^{\nu_0}_{\mu\nu},~0\leq \mu,\nu\leq n$ be a fixed point in (\ref{repfix}) with  $g^{(\nu_0)}_{\mu\nu}(t,.)\in C^2$.
Plugging $g^{(\nu_0)}_{\mu\nu}$ into the harmonic field equation  and using the local contraction result we observe that the limit $g_{\mu\nu}=\lim_{\nu_0\downarrow 0}=g^{(\nu_0)}_{\mu\nu},~g_{\mu\nu,k}=\lim_{\nu_0\downarrow 0}g^{(\nu_0)}_{\mu\nu,k},~\lim_{\nu_0\downarrow 0}h^{(\nu_0)}_{\mu\nu}$ in (\ref{limg}) and in (\ref{limh}) satisfies the harmonic field equation.  Next abbreviate
\begin{equation}
\begin{array}{ll}
\mathbf{f}=\left(f_1,\cdots,f_M\right)\\
\\
=\left(\left( g^{(\nu_0)}_{0\mu\nu}\right)_{0\leq \mu,\nu\leq n},~\left( g^{(\nu_0)}_{0\mu\nu,k}\right)_{0\leq \mu,\nu\leq n,~1\leq k\leq n},~\left( h^{(\nu_0)}_{0\mu\nu}\right)_{0\leq \mu,\nu\leq n}\right)^T,  
\end{array}
\end{equation}
and
\begin{equation}
\begin{array}{ll}
\mathbf{v}^{(\nu_0)}=\left(v^{(\nu_0)}_1,\cdots,v^{(\nu_0)}_M\right)\\
\\
=\left(\left( g^{(\nu_0)}_{\mu\nu}\right)_{0\leq \mu,\nu\leq n},~\left( g^{(\nu_0)}_{\mu,\nu,k}\right)_{0\leq \mu,\nu\leq n,1\leq k\leq n},~\left( h^{(\nu_0)}_{\mu\nu}\right)_{0\leq \mu\nu\leq n}\right)^T, 
\end{array}
\end{equation}
and 
\begin{equation}
F^*(\mathbf{v}^{\nu_0})=\left(F^*_1\left( \left( h^{(\nu_0)}_{\mu\nu}\right)_{0\leq \mu,\nu\leq n}\right), F^*_2\left( \left( h^{(\nu_0)}_{ij,k}\right)_{0\leq \mu,\nu\leq n,1\leq k\leq n}\right), F^*_3\left( \left( \mathbf{v}^{(\nu_0)}\right)  \right)\right)^T,
\end{equation}
where
\begin{equation}
F^*_1\left( \left( h^{(\nu_0)}_{\mu\nu}\right)_{0\leq \mu,\nu\leq n}\right)=\left( h^{(\nu_0)}_{\mu\nu}\right)_{0\leq \mu,\nu\leq n}^T,
\end{equation}
\begin{equation}
F^*_2\left( \left( h^{(\nu_0)}_{\mu\nu,k}\right)_{0\leq \mu,\nu\leq n}\right)=\left( h^{(\nu_0)}_{\mu\nu,k}\right)_{0\leq \mu,\nu\leq n,~1\leq k\leq n}^T,
\end{equation}
and
\begin{equation}
F^*_3 \left( \mathbf{v}^{(\nu_0)}\right)=\left( 
-g^{(\nu_0)}_{00}\left(2g^{(\nu_0)0k}\frac{\partial h^{(\nu_0)}_{\mu\nu}}{\partial x^k}+g^{(\nu_0)km}\frac{\partial g^{(\nu_0)}_{\mu\nu,k}}{\partial x^m} -2H^{(\nu_0)}_{\mu\nu}\right)_{0\leq \mu,\nu\leq n}\right)^T 
\end{equation}
with the obvious identifications (strictly speaking also with some identifications of tuples of tuples $(f_i)_{1\leq i\leq M}$ and their entries). Note that we have imposed no index here, therefore the superscript $*$ at the symbol $F^*$ in order to indicate the difference to $F$.
The equation in (\ref{harm3}) may then be abbreviated as
\begin{equation}
\mathbf{v}^{(\nu_0)}_{,t}-\nu_0\Delta \mathbf{v}^{(\nu_0)}=F^*(\mathbf{v}^{(\nu_0)}),~\mathbf{v}^{(\nu_0)}(0,.)=\mathbf{f}
\end{equation}
where in the limit $\nu\downarrow 0$ the function $\mathbf{v}:=\mathbf{v}^{0}:=\lim_{\nu_0\downarrow 0}\mathbf{v}^{(\nu_0)}$ satisfies the equation
\begin{equation}\label{abbr}
\mathbf{v}_{,t}=F^*(\mathbf{v}).
\end{equation}
For $\mathbf{v}^{\nu,f}:=\mathbf{v}^{(\nu_0)}-\mathbf{f}$ we have $\mathbf{v}^{(\nu_0),f}(0,.)\equiv 0$ and
\begin{equation}
\mathbf{v}^{(\nu_0),f}_{,t}-\nu_0\Delta \mathbf{v}^{(\nu_0),f}=\nu_0 \Delta \mathbf{f}+F^*(\mathbf{v}^{(\nu_0),f}+\mathbf{f}).
\end{equation}
This leads to the representation
\begin{equation}
\mathbf{v}^{(\nu_0),f}=\nu_0 \Delta \mathbf{f}
\ast G_{\nu_0}+
F^*(\mathbf{v}^{(\nu_0),f}+\mathbf{f})\ast G_{\nu_0},
\end{equation}
where the convolution is understood componentwise. The latter statement has a classical interpretation only for smoothed initial data $\mathbf{f}\ast G_{\nu_0}$. However, we can rewrite in terms of first order derivative, i.e, we have
\begin{equation}
\mathbf{v}^{(\nu_0),f}=\nu_0 \sum_{i=1}^n \mathbf{f}_{,i}
\ast G_{\nu_0,i}+
F^*(\mathbf{v}^{(\nu_0),f}+\mathbf{f})\ast G_{\nu_0},
\end{equation}
where the derivative $_{,i}$ is understood componentwise of course, i.e., 
\begin{equation}
\div\mathbf{f}=\left(f_{1,i},\cdots,f_{n,i} \right)^T. 
\end{equation}
Hence, we have
\begin{equation}\label{last}
\mathbf{v}^f=\lim_{\nu_0\downarrow 0}\mathbf{v}^{(\nu_0),f}=\lim_{\nu_0\downarrow 0}\nu_0\sum_i \mathbf{f}_{,i}\ast G_{\nu_0,i}+\lim_{\nu_0\downarrow 0}F^*(\mathbf{v}^{(\nu_0),f}+\mathbf{f})\ast G_{\nu_0}.
\end{equation}
According to our analysis of the Gaussian above the first term on the rightside of the latter equation cancels, and, using continuity of $F$, we observe that the components of $\mathbf{v}^{(\nu_0),f}$ converge in the function space (\ref{incremf}) as $(\nu_0)\downarrow 0$. Considering the time derivative of the last term on the right side of equation (\ref{last}) for fixed $\nu_0>0$ and then going to the limit $\nu_0\downarrow 0$ we observe that the limit function $\mathbf{v}^f$ satisfies
\begin{equation}
\mathbf{v}^f_{,t}=F^*(\mathbf{v}^{f}+\mathbf{f}),
\end{equation}
which is equivalent of (\ref{abbr}). The singularity is generically $L^p$-stable by construction.
Finally we rmark that the provious considerrations can be extended to the case of admissable data with $c_1,c_2>0$ straightforwardly.
The previous argument leads to a fixed point viscosity limit $\nu_0\downarrow 0$ of the third equation in \ref{harm2} ofthe form
\begin{equation}\label{harm2end}
 \begin{array}{ll}
 h_{\mu\nu}={\Bigg (}-g^{(\nu_0)}_{00}\left(2g^{(\nu_0)0k}\frac{\partial h^{(\nu_0)}_{\mu\nu}}{\partial x^k}+g^{(\nu_0)km}\frac{\partial g^{(\nu_0)}_{\mu\nu,k}}{\partial x^m} -2H^{(\nu_0)}_{\mu\nu}\right){\Bigg )}\ast G_{\nu_0}
\end{array}
\end{equation}
where
\begin{equation}\label{hmunuend}
\begin{array}{ll}
g^{(\nu_0)}_{00}H^{(\nu_0)}_{\mu\nu}\equiv g^{(\nu_0)}_{00} H_{\mu\nu}\left(g^{(\nu_0)}_{\alpha\beta},\frac{\partial g^{(\nu_0)}_{\alpha\beta}}{\partial x^{\gamma}} \right)=g^{(\nu_0)}_{00}{\Big (}g^{(\nu_0)\alpha\beta}g_{\delta\epsilon}\Gamma^{(\nu_0)\delta}_{\mu\beta}\Gamma^{(\nu_0)\epsilon}_{\nu\alpha}
\\
\\
+\frac{1}{2}\left(\frac{\partial g_{\mu\nu}}{\partial x^{\alpha}}\Gamma^{\alpha}+g_{\nu\rho}\Gamma^{\rho}_{\alpha\beta}g^{\alpha\eta}g^{\beta\sigma}\frac{\partial g_{\eta\sigma}}{\partial x^{\mu}} +g_{\mu\rho}\Gamma^{\rho}_{\alpha\beta}g^{\alpha\eta}g^{\beta\sigma}\frac{\partial g_{\eta\sigma}}{\partial x^{\nu}}\right){\Big )}\in O\left(\frac{1}{r^2}\right).
\end{array}
\end{equation}
This implies that in the iteration scheme the convoluted terms of $h_{\mu\nu}$ are in $L^2$ (although not in $L^1$) for simension $n=3$. This holds als  for the  other two terms in (\ref{harm2end}). Analgous statements hold for the first order spatial derivatives of $h_{\nu\nu}$. Since the Gaussian is $L^1$- integrable and  Lipschitz continuous functions convoluted with first order spatial derivatives of the Gaussian are $L^1$-integrable, the iteration scheme can also be applied to general admissable data.

In summary the preceding argument gives counterexamples to the conjecture in \cite{P} where precise statements and characterisations can be found in  \cite{HE,W} and in \cite{N1,N2}. The method allows to construct counterexamples which are generically $L^p$-stable and have different stability properties compared to the singularities constructed in \cite{Cr1,Cr2}. We note furthermore that the existence ressult is not covered by the result stated in \cite{HKM}.  

\footnotetext[1]{\texttt{{kampen@mathalgorithm.de}, {kampen@wias-berlin.de}}.}

\newpage

\end{document}